\providecommand{\U}[1]{\protect\rule{.1in}{.1in}}
\let\oldmathbf\mathbf
\renewcommand{\mathbf}[1]{\boldsymbol{\oldmathbf{#1}}}
\newtheorem{theorem}{Theorem}
\newtheorem{corollary}[theorem]{Corollary}
\newtheorem{definition}[theorem]{Definition}
\newtheorem{lemma}[theorem]{Lemma}
\newtheorem{proposition}[theorem]{Proposition}
\begin{document}
\title{Optimal asymptotic bounds for designs on manifolds}
\author[B. Gariboldi]{Bianca Gariboldi}
\address{Dipartimento di Ingegneria Gestionale, dell'Informazione e della Produzione,
Universit\`a degli Studi di Bergamo, Viale Marconi 5, Dalmine BG, Italy}
\email{biancamaria.gariboldi@unibg.it}
\author[G. Gigante]{Giacomo Gigante}
\address{Dipartimento di Ingegneria Gestionale, dell'Informazione e della Produzione,
Universit\`a degli Studi di Bergamo, Viale Marconi 5, Dalmine BG, Italy}
\email{giacomo.gigante@unibg.it}

\subjclass[2010]{41A55, 58J35, 42C15}
\keywords{Designs, Riemannian manifolds, Marcinkiewicz-Zygmund inequalities.}

\begin{abstract}
We extend to the case of a $d$-dimensional compact connected oriented  Riemannian manifold $\mathcal M$ the theorem of A. Bondarenko, D. Radchenko and M. Viazovska \cite{BRV} on the existence of $L$-designs consisting of $N$ nodes, for any  $N\ge C_{\mathcal M} L^d$. For this, we need to prove a version of the Marcinkiewicz-Zygmund inequality for 
the gradient of diffusion polynomials.
\end{abstract}
\maketitle

\section{Introduction}

Let $\mathcal{M}$ be a connected compact orientable $d$-dimensional Riemannian
manifold without boundary with normalized Riemannian measure $d\mu$, such that
$\mu\left(  \mathcal{M}\right)  =1$. We shall denote the Riemannian distance
between $x$ and $y$ by $\left\vert x-y\right\vert $. Let $\left\{  \varphi
_{k}\right\}  _{k=0}^{+\infty}$ be the eigenfunctions of the (positive)
Laplace-Beltrami operator, with eigenvalues $0=\lambda_{0}^{2}<\lambda_{1}%
^{2}\leq\lambda_{2}^{2}\leq\ldots$ , $\Delta\varphi_{k}=\lambda_{k}^{2}%
\varphi_{k}$.

The space of diffusion polynomials of bandwith $L\geq0$ is%
\[
\Pi_{L}=\operatorname*{span}\left\{  \varphi_{k}:\lambda_{k}\leq L\right\}  .
\]
We say that a set of points $\left\{  x_{j}\right\}  _{j=1}^{N}\subset
\mathcal{M}$ is an $L$-design if
\[
\int_{\mathcal{M}}P\left(  x\right)  d\mu\left(  x\right)  =\sum_{j=1}%
^{N}\frac{1}{N}P\left(  x_{j}\right)  ,\quad\text{for all }P\in\Pi_{L}.
\]
Observe that since the above identity is trivially satisfied by constant
functions, and since by orthogonality of the eigenfunctions $\varphi_{k}$,%
\[
\int_{M}\varphi_{k}\left(  x\right)  d\mu\left(  x\right)  =0,\quad\text{for
all }k\geq1,
\]
then $\left\{  x_{j}\right\}  _{j=1}^{N}\subset\mathcal{M}$ is an $L$-design
if and only if
\[
\sum_{j=1}^{N}\frac{1}{N}P\left(  x_{j}\right)  =0,\quad\text{for all }P\in
\Pi_{L}^{0},
\]
where $\Pi_{L}^{0}$ is the subspace of $\Pi_{L}\subset L^2(\mathcal M,\,d\mu)$ orthogonal to the constant functions, that is
$\Pi_{L}^{0}=\operatorname*{span}\{\varphi_k:0<\lambda_k\le L\}$.

By Weyl's estimates on the spectrum of an elliptic operator \cite[Theorem
17.5.3]{HOR}, $\dim\left(  \Pi_{L}\right)  \sim L^{d}$. For each $L\geq0$
denote with $N\left(  L\right)  $ the minimal number of points in an
$L$-design in $\mathcal{M}$.

\begin{proposition}\label{sotto}
There exists a positive constant $c_{\mathcal{M}}$ such that $N\left(
L\right)  \geq c_{\mathcal{M}}L^{d}$ for every $L\geq0$.
\end{proposition}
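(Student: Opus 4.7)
My plan is to play the $L$-design quadrature off against a positive, concentrated polynomial kernel. Concretely, the reduction I would aim for is the following: for every $x_{0}\in\mathcal M$ and every $L\geq 1$, produce a real-valued $\Phi_{x_{0}}^{L}\in\Pi_{L}$ that is pointwise nonnegative on $\mathcal M$ and satisfies the peaking estimate
\begin{equation*}
\Phi_{x_{0}}^{L}(x_{0})\;\geq\;c_{\mathcal M}\,L^{d}\int_{\mathcal M}\Phi_{x_{0}}^{L}\,d\mu,
\end{equation*}
for some constant $c_{\mathcal M}>0$ depending only on $\mathcal M$.

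Given such a kernel, the proposition is immediate. For any $L$-design $\{x_{j}\}_{j=1}^{N}\subset\mathcal M$, set $x_{0}=x_{j_{0}}$ equal to any one of the design points, and apply the quadrature identity to $\Phi_{x_{0}}^{L}\in\Pi_{L}$:
\begin{equation*}
\int_{\mathcal M}\Phi_{x_{0}}^{L}\,d\mu
\;=\;\frac{1}{N}\sum_{j=1}^{N}\Phi_{x_{0}}^{L}(x_{j})
\;\geq\;\frac{1}{N}\Phi_{x_{0}}^{L}(x_{0})
\;\geq\;\frac{c_{\mathcal M}L^{d}}{N}\int_{\mathcal M}\Phi_{x_{0}}^{L}\,d\mu,
\end{equation*}
where the middle step retains only the $j=j_{0}$ contribution using pointwise nonnegativity. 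Dividing through by $\int_{\mathcal M}\Phi_{x_{0}}^{L}\,d\mu$ gives $N\geq c_{\mathcal M}L^{d}$.

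The real work is thus the construction of $\Phi_{x_{0}}^{L}$. On the round sphere it is classical: one takes $\Phi_{x_{0}}^{L}(y)=K_{L/2}(x_{0},y)^{2}$, where $K_{L/2}$ is the reproducing kernel of $\Pi_{L/2}$. This is nonnegative as a square; it lies in $\Pi_{L}$ because the pointwise product of two spherical polynomials of bandwidth $\leq L/2$ has bandwidth $\leq L$; its integral equals $K_{L/2}(x_{0},x_{0})$ by reproducing; its value at $x_{0}$ is $K_{L/2}(x_{0},x_{0})^{2}$; and $K_{L/2}(x_{0},x_{0})\asymp L^{d}$ by Weyl's local law $\sum_{\lambda_{k}\leq L/2}|\varphi_{k}(x)|^{2}\asymp L^{d}$. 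The peaking estimate is immediate.

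On a general compact Riemannian manifold the squared-reproducing-kernel recipe breaks down because a product of two diffusion polynomials of bandwidth $L/2$ need not itself be a diffusion polynomial of bandwidth $L$; this is the main obstacle. To get around it, I would build $\Phi_{x_{0}}^{L}$ directly from spectral calculus on $\mathcal M$, of the form
\begin{equation*}
\Phi_{x_{0}}^{L}(y)\;=\;\sum_{k}g(\lambda_{k}/L)\,\varphi_{k}(x_{0})\,\varphi_{k}(y)
\end{equation*}
for some fixed bump $g\geq 0$ on $[0,1]$ with $g(0)=1$. By construction this lies in $\Pi_{L}$, integrates to $1$ in the $y$-variable, and by Weyl's local law satisfies $\Phi_{x_{0}}^{L}(x_{0})\asymp L^{d}$. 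The delicate missing ingredient is pointwise nonnegativity on $\mathcal M\times\mathcal M$, which can be arranged by choosing $g$ to be of positive Fourier-Bochner type so that the resulting kernel is modeled on (or dominated by) a truncated heat semigroup at scale $t\asymp L^{-2}$ and inherits its positivity.
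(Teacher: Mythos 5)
Your approach is genuinely different from the paper's: the paper derives $N(L)\gtrsim L^{d}$ by playing two quadrature-error estimates for Sobolev classes against each other — the upper bound $\lesssim L^{-\alpha}\|f\|_{\alpha,1}$ valid for $L$-designs (Theorem~2.12 of \cite{quadrature}) and the lower bound $\gtrsim N^{-\alpha/d}\|f\|_{\alpha,1}$ valid for arbitrary $N$-point equal-weight rules (Theorem~2.16 of \cite{quadrature}) — and comparing exponents. You instead aim for the classical positive-peaking-kernel argument, which would give the same conclusion very cleanly if the kernel could be built.

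The gap is in the last paragraph, and it is not a technicality: the proposed construction of a pointwise nonnegative element of $\Pi_{L}$ concentrating at $x_{0}$ does not go through on a general compact Riemannian manifold, and as far as I know it is not achievable. A spectral multiplier
\[
\Phi_{x_{0}}^{L}(y)=\sum_{k}g(\lambda_{k}/L)\,\varphi_{k}(x_{0})\,\varphi_{k}(y)
\]
with $g\geq 0$ supported in $[0,1]$ lies in $\Pi_{L}$ and integrates to $g(0)$, and if $g$ is bounded below on $[0,c]$ then the local Weyl law gives $\Phi_{x_{0}}^{L}(x_{0})\asymp L^{d}$ — that part is fine. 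But nonnegativity of the kernel as a function of $(x,y)$ is a completely different matter from nonnegativity (or positive-definiteness) of $g$. The nonnegativity of the untruncated heat kernel $\sum_{k}e^{-\lambda_{k}^{2}t}\varphi_{k}(x)\varphi_{k}(y)$ comes from the parabolic maximum principle / Markov property, and this is destroyed the moment you cut the spectrum to $\lambda_{k}\leq L$: band-limited kernels oscillate and take negative values. ``Dominated by a truncated heat semigroup'' also does not help; dominating a sign-changing function by a nonnegative one says nothing about its own sign. On the sphere and on algebraic manifolds one can cheat by squaring a reproducing kernel, because there the spectral band of a product of two band-limited functions is controlled (a product of degree-$\leq L/2$ polynomials has degree $\leq L$); that algebraic input is precisely what fails for generic $\mathcal{M}$, as you yourself note, and no replacement is offered. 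Absent a genuinely new construction of positive diffusion polynomials, the peaking-kernel route is blocked, which is presumably why the paper takes the Sobolev worst-case-error detour.
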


\begin{proof}
Assume $\left\{  x_{j}\right\}  _{j=1}^{N\left(  L\right)  }$ is an $L$-design
with exactly $N\left(  L\right)  $ nodes. By Theorem~2.12 in \cite{quadrature}
there exists a constant $\beta>0$ such that for every $f$ in the Sobolev space
$W^{\alpha,1}\left(  \mathcal{M}\right)  $ with $\alpha>d$ one has%
\[
\left\vert \int_{\mathcal{M}}f\left(  x\right)  d\mu\left(  x\right)
-\sum_{j=1}^{N\left(  L\right)  }\frac{1}{N\left(  L\right)  }f\left(
x_{j}\right)  \right\vert \leq \beta L^{-\alpha}\left\Vert f\right\Vert
_{\alpha,1}.
\]
On the other hand, by Theorem 2.16 in \cite{quadrature}, there exists a
constant $\gamma>0$ such that for every $L$ there exists a function $f_{L}\in
W^{\alpha,1}\left(  \mathcal{M}\right)  $ with%
\[
\left\vert \int_{\mathcal{M}}f_{L}\left(  x\right)  d\mu\left(  x\right)
-\sum_{j=1}^{N\left(  L\right)  }\frac{1}{N\left(  L\right)  }f_{L}\left(
x_{j}\right)  \right\vert \geq \gamma N\left(  L\right)  ^{-\alpha/d}\left\Vert
f_{L}\right\Vert _{\alpha,1}.
\]
This gives
\[
N\left(  L\right)  \geq \gamma^{d/\alpha}\beta^{-d/\alpha}L^{d}.
\]

\end{proof}

Korevaar and Meyers \cite{KM} conjectured that when $\mathcal{M}$ is the
$d$-dimensional sphere, there is a constant $C_{d}$ such that $N\left(
L\right)  \leq C_{d}L^{d}$ for any positive $L$. Bondarenko, Radchenko and
Viazovska \cite{BRV} show an even stronger version of Korevaar and Meyer's
conjecture, namely they show that there is a constant $C_{d}$ such that
\textit{for every} $N\geq C_{d}L^{d}$ there exists an $L$-design in
the $d$-dimensional sphere with exactly $N$ nodes. Later, Etayo, Marzo and
Ortega-Cerd\`{a} \cite{EMOC} by means of the same techniques as in \cite{BRV},
generalize the result of Bondarenko, Radchenko and Viazovska to the case of an
affine algebraic manifold. In particular the main ingredients in these proofs
are a result from the Brouwer degree theory, a partition of the ambient space
$\mathcal{M}$ into equal area regions with small diameter, and a
Marcinkiewicz-Zygmund inequality for \textit{the gradient} of diffusion
polynomials. The result from the Brouwer degree theory is completely abstract
and can be directly applied independently of the manifold.

\begin{theorem}\label{brouwer}
\cite[Theorem 1.2.9.]{brouwer} Let $H$ be a finite dimensional Hilbert space with inner product $\langle\cdot,\cdot\rangle$. Let $f:H\rightarrow H$
be a continuous mapping and $\Omega$ an open bounded subset with boundary
$\partial\Omega$ such that $0\in\Omega\subset H$. If $\left\langle
x,f\left(  x\right)  \right\rangle >0$ for all $x\in\partial\Omega$, then
there exists $x\in\Omega$ satisfying $f\left(  x\right)  =0.$
\end{theorem}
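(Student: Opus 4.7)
The plan is to invoke Brouwer degree theory, using a straight-line homotopy between $f$ and the identity map. First I note that the hypothesis $\langle x, f(x)\rangle > 0$ on $\partial\Omega$ immediately implies $f(x) \neq 0$ on $\partial\Omega$, so the topological degree $\deg(f, \Omega, 0)$ is well-defined.

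The heart of the argument is the linear homotopy $\Phi: [0,1] \times \overline\Omega \to H$ given by
\[
\Phi(t, x) = (1-t)\, x + t\, f(x).
\]
I want to show that $\Phi(t, x) \neq 0$ for all $(t, x) \in [0,1] \times \partial\Omega$. The natural way to extract this from the hypothesis is to pair against $x$:
\[
\langle x, \Phi(t, x)\rangle = (1-t)\,\|x\|^2 + t\,\langle x, f(x)\rangle.
\]
Since $\Omega$ is open and contains $0$, no boundary point $x$ can be zero, so $\|x\|^2 > 0$; together with $\langle x, f(x)\rangle > 0$, each summand on the right is non-negative, and for every $t \in [0,1]$ at least one is strictly positive (the first for $t \in [0,1)$, the second for $t \in (0,1]$). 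Hence $\langle x, \Phi(t, x)\rangle > 0$ throughout, and in particular $\Phi(t, x)$ never vanishes on $\partial\Omega$.

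By homotopy invariance of the Brouwer degree,
\[
\deg(f, \Omega, 0) = \deg(\mathrm{id}, \Omega, 0) = 1,
\]
the last equality because $0 \in \Omega$. A non-zero degree forces the existence of an $x \in \Omega$ with $f(x) = 0$, which concludes the argument. The only conceptual ingredient is Brouwer degree (equivalently, the Brouwer fixed-point theorem, from which one could also argue by contradiction: if $f$ were nowhere zero on $\overline\Omega$, the map $x \mapsto -f(x)/\|f(x)\|$ composed with a suitable radial retraction would produce a retraction of $\overline\Omega$ onto $\partial\Omega$). There is no serious obstacle here: the positivity hypothesis was tailored precisely so that the straight-line homotopy avoids the origin.
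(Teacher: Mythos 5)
The paper does not prove this statement at all: it is imported verbatim from O'Regan, Cho and Chen (their Theorem 1.2.9) and used as a black box, so there is no in-paper proof to compare against. Your argument is correct and is in fact the standard textbook proof of that result: you observe that $\langle x, f(x)\rangle > 0$ on $\partial\Omega$ makes $\deg(f,\Omega,0)$ well-defined, run the straight-line homotopy $\Phi(t,x)=(1-t)x+tf(x)$, verify it never vanishes on $\partial\Omega$ by pairing against $x$ and using $\|x\|^2>0$ (valid because $0\in\Omega$ open forces $\partial\Omega\not\ni 0$), conclude $\deg(f,\Omega,0)=\deg(\mathrm{id},\Omega,0)=1$, and invoke the existence property of nonzero degree. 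Every step is sound, and there is nothing to add beyond noting that the finite-dimensionality of $H$ is what permits the use of Brouwer (rather than Leray--Schauder) degree.
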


The second result has been proved recently in \cite{GL} for Ahlfors regular
metric measure spaces, and therefore holds for compact Riemannian manifolds as well.

\begin{theorem}
\label{partition}There exist two positive constants $c_1$ and $c_{2}$ such that
for every $N\ge1$ there is a collection of sets $\mathcal{R}=\left\{
R_{j}\right\}  _{j=1}^{N}$ that partition $\mathcal{M}$ in the sense that
$\cup_{j=1}^N R_j=\mathcal M$, and $\mu(R_i\cap R_j)=0$ for all $1\le i<j\le N$,
such that each region has measure $1/N$, is contained in a geodesic (closed) 
ball $X_j$ of radius $c_{2}N^{-1/d}$ and contains a geodesic ball $Y_j$ of radius $c_1 N^{-1/d}$.
\end{theorem}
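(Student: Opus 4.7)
The key structural facts to exploit are that $(\mathcal M,|\cdot|,\mu)$ is Ahlfors $d$-regular, i.e.\ $\mu(B(x,r))\asymp r^{d}$ uniformly for $0<r\le\operatorname{diam}(\mathcal M)$, and that in normal coordinates $\mu$ is comparable to Lebesgue measure with distortion tending to $1$ on shrinking balls. My plan is to construct $\mathcal R$ by iterated bisection: starting from $\mathcal M$ itself, at each stage I would pick the region of largest current measure, split it into two subregions of prescribed masses, and stop once every piece has mass exactly $1/N$. The bisection step uses an intermediate value argument, sliding a geodesic hypersurface (locally, a chart-hyperplane) across the region: by absolute continuity of $\mu$ with respect to the sliding parameter, any prescribed mass ratio $p:(1-p)$ with $p$ bounded away from $0$ and $1$ can be achieved.

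To obtain simultaneously the outer containment in $X_{j}$ and the inner containment $Y_{j}\subset R_{j}$, I would maintain throughout the iteration an \emph{aspect ratio} invariant: every current region is sandwiched between two geodesic balls whose radii are comparable with constants independent of $N$ and of the iteration depth. Working in a finite atlas of normal coordinate charts, this is preserved by always cutting perpendicular to the longest axis of the region in that chart, in the spirit of the classical Feige--Schechtman equal-area subdivision of a Euclidean cube. After $O(\log N)$ halvings every piece has diameter $\lesssim N^{-1/d}$, giving $X_{j}$; the matching lower bound $\mu(R_{j})=1/N$ combined with Ahlfors regularity and the aspect ratio invariant then produces the inscribed ball $Y_{j}$ of radius $\asymp N^{-1/d}$.

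Two technical points are worth flagging. First, $N$ may not be a power of $2$, so the bisections are performed with unequal ratios chosen at each step to leave the correct number of "slots" in each branch of the recursion; a careful accounting keeps all ratios bounded away from $0$ and $1$, so the aspect ratio invariant is maintained. Second --- and this is the main obstacle --- a region in the early stages of the recursion may be too large to fit inside a single coordinate chart, so that "cut perpendicular to the longest axis" is not directly meaningful. I would overcome this by first performing a coarse preliminary partition of $\mathcal M$ into $\mathcal O(1)$ pieces, each contained in one chart and of good shape, and then running the inductive bisection scheme independently inside each piece with the appropriate target count; the Ahlfors regularity assumption guarantees that the chart-level Euclidean estimates transfer to geodesic balls in $\mathcal M$ with only universal constants lost.
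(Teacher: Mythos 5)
The paper does not give its own proof of Theorem~\ref{partition}; it imports the statement from Gigante and Leopardi \cite{GL}, who establish it for general Ahlfors $d$-regular metric measure spaces. In that generality there are no coordinate charts, normal coordinates, or hyperplanes to slide, so the proof in \cite{GL} is necessarily a purely metric-measure construction. Your approach --- a finite atlas, normal coordinates, and Euclidean-style dyadic bisection with an aspect-ratio invariant --- is therefore a genuinely different and more hands-on route, confined to the Riemannian case; that restriction is harmless here, since the theorem is only applied to Riemannian manifolds.

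What you have, though, is a sketch with two real gaps. The first is the coarse preliminary partition, which you flag but underestimate. For the later bisections to produce cells of $\mu$-measure \emph{exactly} $1/N$, each coarse piece $Q_i$ must have $\mu(Q_i)=k_i/N$ for an integer $k_i$ with $\sum_i k_i = N$, and this must hold for every $N\ge 1$. A fixed, $N$-independent coarse partition cannot satisfy this (almost surely, $N\mu(Q_i)\notin\mathbb{Z}$), so the coarse boundaries must themselves move with $N$; you then need to show that this adjustment can be made while each $Q_i$ stays bi-Lipschitz equivalent, with $N$-independent constants, to a Euclidean cube sitting inside one chart. Your proposal asserts that ``a careful accounting'' takes care of this but does not explain the mechanism. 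The second gap concerns the aspect-ratio invariant, which is doing all the work for the inscribed ball. The clean argument ``cut perpendicular to the longest side at ratios bounded away from $0$ and $1$'' preserves bounded aspect ratio when every piece is an axis-aligned Euclidean box and the cuts are taken at prescribed Lebesgue ratios. Here the pieces carry the pushed-forward Riemannian measure, which is only \emph{comparable} to Lebesgue measure, so a prescribed $\mu$-ratio gives a side-length ratio only up to the comparability constants; moreover, after the coarse step the pieces must actually be chart images of boxes (not arbitrary chart domains) for the box-structure to persist under iteration. These points are fixable but must be stated, because the conclusion $Y_j\subset R_j$ is \emph{not} a free consequence of equal measure plus bounded diameter: a set of measure $1/N$ and diameter $O(N^{-1/d})$ need not contain any ball of radius $\gtrsim N^{-1/d}$. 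The aspect-ratio invariant is precisely where that conclusion is earned, so it is the one step that cannot be left at the level of a gesture.
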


The Marcinkiewicz-Zygmund inequality for diffusion polynomials on manifolds
(and much more general spaces) has been proved in a series of papers by
Maggioni, Mhaskar and Filbir \cite{FM2010,FM2011, MM}.

\begin{theorem}
\label{MZ polys}\cite[Theorem 5.1]{FM2011} Assume that $c_1$ and $c_2$ are constants
for which Theorem \ref{partition} holds. Then there exists a constant $C>0$ such
that for all integers $N\ge 1$, for all partitions $\mathcal{R=}\left\{
R_{j}\right\}  _{j=1}^{N}$ with constants $c_1$ and $c_2$ as in Theorem \ref{partition}, for all $x_{j}\in
R_{j}$, for all $L\le N^{1/d}$ and for all $P\in\Pi_{L}$%
\[
\left\vert \int_{\mathcal{M}}\left\vert P\left(  x\right)  \right\vert
d\mu\left(  x\right)  -\sum_{j=1}^{N}\frac{1}{N}\left\vert P\left(
x_{j}\right)  \right\vert \right\vert \leq CLN^{-1/d}\int_{\mathcal{M}%
}\left\vert P\left(  x\right)  \right\vert d\mu\left(  x\right)  .
\]

\end{theorem}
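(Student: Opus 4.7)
The plan is to reduce the $L^1$ quadrature comparison to an $L^1$ Bernstein inequality for $\nabla P$ via a telescoping-on-regions argument. Since $\mu(R_j)=1/N$ and the $R_j$ partition $\mathcal{M}$, one can write
\[
\int_{\mathcal{M}}\!|P(x)|\,d\mu(x) - \sum_{j=1}^{N}\frac{1}{N}|P(x_j)| = \sum_{j=1}^{N}\int_{R_j}\bigl(|P(x)|-|P(x_j)|\bigr)\,d\mu(x),
\]
so by the reverse triangle inequality the quadrature error is bounded by $\sum_j \int_{R_j} |P(x)-P(x_j)|\,d\mu(x)$.

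The next step is to estimate $|P(x)-P(x_j)|$ by integrating $|\nabla P|$ along a minimizing geodesic. Since both $x\in R_j\subset X_j$ and $x_j\in X_j$, the Riemannian distance is at most $2c_2 N^{-1/d}$, and hence
\[
|P(x)-P(x_j)| \le 2c_2 N^{-1/d}\sup_{z\in X_j}|\nabla P(z)|.
\]
Summing over $j$, the quadrature error is bounded by $2c_2\,N^{-1-1/d}\sum_j \sup_{X_j}|\nabla P|$.

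To finish, I would replace the supremum on each $X_j$ by an $L^1$ average over a slightly larger set $\tilde X_j$; summing with bounded overlap would yield a multiple of $\int_{\mathcal M}|\nabla P|\,d\mu$, and then an $L^1$ Bernstein inequality $\int_{\mathcal M}|\nabla P|\,d\mu \le CL\int_{\mathcal M}|P|\,d\mu$ would deliver the desired factor $CLN^{-1/d}$. The max-to-mean passage exploits that, because $L\le N^{1/d}$, the radii $c_2N^{-1/d}$ are at most $c_2/L$, which is the natural length scale at which a diffusion polynomial of bandwidth $L$ is essentially constant: convolving with a well-localized reproducing kernel $K_L$ for $\Pi_L$ produces $|\nabla P(z)|\le CN\int_{\tilde X_j}|\nabla P|\,d\mu$ for all $z\in X_j$.

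The main obstacle is the construction of such a localized reproducing kernel with sharp gradient estimates. This is the technical heart of the Filbir--Mhaskar framework and is built from spectral calculus applied to smooth compactly supported cut-offs of $\sqrt{\Delta}$, together with Gaussian heat-kernel bounds on $\mathcal{M}$ to get the off-diagonal decay. Once the kernel is in hand, both the max-to-mean estimate and the $L^1$ Bernstein inequality for $\nabla P$ follow by now-standard arguments, and the telescoping above produces the asserted constant $CLN^{-1/d}$.
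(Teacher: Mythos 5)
First, note that the paper does not actually prove Theorem~\ref{MZ polys}; it cites it from \cite{FM2011}. What the paper does prove in detail is the analogous Theorem~\ref{MZ gradients}, whose proof ``follows the lines'' of the Filbir--Mhaskar argument, so that is the natural benchmark. Your high-level scheme matches theirs at the start: telescope over the $R_j$, pass from $|P(x)-|P(x_j)|$ to $|P(x)-P(x_j)|$, bound that along a geodesic by $2c_2N^{-1/d}\sup_{X_j}\|\nabla P\|$, and reduce the problem to controlling $\sum_j N^{-1}\sup_{X_j}\|\nabla P\|$.

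Where you diverge --- and where the gap lies --- is the ``max-to-mean'' claim $|\nabla P(z)|\le CN\int_{\tilde X_j}|\nabla P|\,d\mu$ for $z\in X_j$. That pointwise inequality is too strong. Convolving with a localized reproducing kernel produces
\[
|\nabla P(z)|\le C\int_{\mathcal M}\|\nabla P(y)\|\,\frac{L^{d}}{(1+L|z-y|)^{S}}\,d\mu(y),
\]
which has a non-negligible far-field tail; there is nothing that lets you discard the contribution from $\mathcal M\setminus\tilde X_j$. What is true, and what the paper exploits, is that after \emph{summing over $j$} the tails are controlled: the balls $Y_j\subset R_j$ give bounded overlap, and one sums the polynomially decaying weights over all $j$ (this is precisely the $J$/$J'$ split and the use of \eqref{radiale} in the proof of Theorem~\ref{MZ gradients}). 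So the step has to be carried out globally, not ball by ball. A second, more stylistic divergence: you route through a separate $L^1$ Bernstein inequality $\int|\nabla P|\le CL\int|P|$. In the paper's scheme the factor $L$ arises directly from one $x$-derivative of the reproducing kernel $\Psi_L$ (the estimate $\|\nabla_x\Psi_L(x,y)\|\lesssim L^{d+1}(1+L|x-y|)^{-S}$), so Bernstein never appears. Your decomposition is not circular, but it introduces two unproved kernel-based lemmas (the global max-to-mean summation and the $L^1$ Bernstein inequality) where the paper's single kernel estimate suffices, and the first of those lemmas is not correctly stated in your draft.
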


When $\mathcal{M}$ is the sphere and the diffusion polynomials are
restrictions to $\mathcal{M}$ of polynomials in $d+1$ real variables of
degree at most $L$, then the gradient of a polynomial is again a polynomial,
and therefore the Marcinkiewicz-Zygmund inequality for gradients follows
easily from the above Theorem \ref{MZ polys}. In the case of algebraic
manifolds, ad hoc arguments that use the complexification of the variety
$\mathcal{M}$ can be applied (see \cite{EMOC}). In the general case of
Riemannian manifolds, the above types of arguments fail. Here we show a
Marcinkiewicz-Zygmund inequality for gradients of diffusion polynomials and
consequently prove the Korevaar and Meyer's conjecture in the case of
Riemannian manifolds.

\begin{theorem}
\label{MZ gradients}  Assume that $c_1$ and $c_2$ are constants
for which Theorem \ref{partition} holds. Then there exists a constant $C_3=C_3(c_1,c_2)>0$ such
that for all integers $N\ge 1$, for all partitions $\mathcal{R=}\left\{
R_{j}\right\}  _{j=1}^{N}$ with constants $c_1$ and $c_2$ as in Theorem \ref{partition}, for all $x_{j}\in
R_{j}$, for all $L\le N^{1/d}$ and for all $P\in\Pi_{L}^{0}$,
\[
\left\vert \int_{\mathcal{M}}\left\Vert \nabla P\left(  x\right)  \right\Vert
d\mu\left(  x\right)  -\sum_{j=1}^{N}\frac{1}{N}\left\Vert \nabla P\left(
x_{j}\right)  \right\Vert \right\vert \leq C_3LN^{-1/d}\int_{\mathcal{M}%
}\left\Vert \nabla P\left(  x\right)  \right\Vert d\mu\left(  x\right)  .
\]

\end{theorem}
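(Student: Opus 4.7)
The plan is to localize the error on each region $R_j$, use parallel transport to estimate $\|\nabla P(x)\|-\|\nabla P(x_j)\|$ in terms of the Hessian of $P$, and then control the resulting Hessian averages by the $L^{1}$-norm of $\|\nabla P\|$ through local Bernstein and Nikolskii inequalities for diffusion polynomials on balls at the scale $N^{-1/d}\lesssim L^{-1}$.

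First I would decompose the error as
\[
\int_{\mathcal M}\|\nabla P\|\,d\mu - \frac{1}{N}\sum_{j=1}^N\|\nabla P(x_j)\| = \sum_{j=1}^N\int_{R_j}\bigl(\|\nabla P(x)\|-\|\nabla P(x_j)\|\bigr)\,d\mu(x),
\]
and assume $N$ is large enough that $c_2 N^{-1/d}$ lies below the injectivity radius of $\mathcal M$ (the finitely many smaller $N$ being absorbed into $C_3$, since $\Pi_L^0$ is then finite-dimensional with equivalent norms). Every $x\in R_j$ is then joined to $x_j$ by a unique minimal geodesic $\gamma$ of length $\ell=d(x_j,x)\le c_2 N^{-1/d}$. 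Parallel transport $\tau$ along $\gamma$ is an isometry, and the covariant derivative along $\gamma$ of the parallel-transported field $\nabla P$ has norm bounded by $\|\mathrm{Hess}\,P\|_{\mathrm{op}}$, so the reverse triangle inequality and the fundamental theorem of calculus yield
\[
\bigl|\|\nabla P(x)\|-\|\nabla P(x_j)\|\bigr| \le \|\nabla P(x)-\tau(\nabla P(x_j))\| \le \int_0^\ell\|\mathrm{Hess}\,P(\gamma(s))\|\,ds \le c_2 N^{-1/d}\sup_{X_j}\|\mathrm{Hess}\,P\|.
\]
Summing and using $\mu(R_j)=1/N$, the theorem reduces to the inequality $N^{-1}\sum_j\sup_{X_j}\|\mathrm{Hess}\,P\| \le CL\int_{\mathcal M}\|\nabla P\|\,d\mu$.

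To establish this I would prove two local estimates for $P\in\Pi_L^0$ on balls $B$ of radius $r\lesssim L^{-1}$ (a regime containing the $X_j$ since $L\le N^{1/d}$), with $B^\ast,B^{\ast\ast}$ concentric enlargements of comparable radius: (a) a local Bernstein inequality $\sup_B\|\mathrm{Hess}\,P\|\le CL\sup_{B^\ast}\|\nabla P\|$; and (b) a local Nikolskii inequality $\sup_{B^\ast}\|\nabla P\|\le C\mu(B^{\ast\ast})^{-1}\int_{B^{\ast\ast}}\|\nabla P\|\,d\mu$. Both are consequences of the Mhaskar--Maggioni--Filbir construction of a well-localized spectral projection kernel for $\Pi_L$ whose sub-exponential off-diagonal decay is preserved under differentiation, each covariant derivative contributing a factor $L$. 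For (a) one uses in addition that the $1$-form $dP$ lies in the spectral subspace of the Hodge Laplacian on $1$-forms corresponding to eigenvalues $\le L^2$, so that a vector-valued Bernstein for that operator yields $\|\nabla(dP)\|\le CL\|dP\|$ locally in $L^\infty$. Chaining (a) and (b) produces $\sup_{X_j}\|\mathrm{Hess}\,P\|\le CL\,\mu(X_j^{\ast\ast})^{-1}\int_{X_j^{\ast\ast}}\|\nabla P\|\,d\mu$; summing over $j$ and using Ahlfors regularity of $\mathcal M$ (so that $\mu(X_j^{\ast\ast})\sim 1/N$) together with the bounded multiplicity of $\{X_j^{\ast\ast}\}$ (a consequence of the geometric regularity in Theorem~\ref{partition}) yields the required inequality.

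The main obstacle is the local Bernstein estimate (a). On the sphere or on an affine algebraic manifold, $\nabla P$ is itself essentially a polynomial and scalar Bernstein--Nikolskii suffices; on a general Riemannian manifold one has to work with covariant derivatives of the reproducing kernel of $\Pi_L$ (equivalently, with the Hodge Laplacian on $1$-forms) and carefully track the curvature contributions that arise in the differentiation. This is the new technical ingredient the paper develops in order to extend the Bondarenko--Radchenko--Viazovska theorem to arbitrary compact Riemannian manifolds.
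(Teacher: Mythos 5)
Your plan is to reduce the theorem to a Lipschitz estimate for $\|\nabla P\|$ in terms of the Hessian, and then to control $N^{-1}\sum_j \sup_{X_j}\|\mathrm{Hess}\,P\|$ by chaining a local Bernstein inequality with a local Nikolskii inequality, each with a purely local right-hand side. The first reduction is sound: the reverse triangle inequality together with parallel transport along a minimizing geodesic does give $\bigl|\|\nabla P(x)\|-\|\nabla P(x_j)\|\bigr|\le \mathrm{diam}(R_j)\sup_{X_j}\|\mathrm{Hess}\,P\|$. The gap is in steps (a) and (b). The only localization available for diffusion polynomials on a general Riemannian manifold is the polynomial off-diagonal decay $L^{K}(1+L|x-y|)^{-S}$ of the Filbir--Mhaskar kernel and of its derivatives; this decay is not exponential and is not supported near the diagonal. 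Consequently, when you represent $\mathrm{Hess}\,P(x)$ as an integral of $\nabla P$ against a differentiated kernel and split into a near and far part, the far part contributes a \emph{global} term of the form $CL^{d+1}R_0^{-S}\int_{\mathcal M}\|\nabla P\|\,d\mu$ that cannot be discarded. The clean local-to-local inequalities $\sup_B\|\mathrm{Hess}\,P\|\le CL\sup_{B^\ast}\|\nabla P\|$ and $\sup_{B^\ast}\|\nabla P\|\le C\mu(B^{\ast\ast})^{-1}\int_{B^{\ast\ast}}\|\nabla P\|$ with comparable radii $\sim L^{-1}$ are therefore not established by the references you cite, and your summation scheme is exactly the one that breaks: adding the tail term over $j=1,\dots,N$ introduces an extra factor of $N$, since the global tail is counted once per region.

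The paper sidesteps this by never proving a per-region estimate. After smoothing $\|\nabla P\|$ (via the auxiliary vector fields $T$ and $S$ built from the regularizer $v_\varepsilon$), it writes $STP(x)$ as a \emph{single} integral against the kernel $\nabla_y S_x T_x W_L(x,y)$ and proves one bound, $\|\nabla_y S_x T_x W_L(x,y)\|\le \kappa L^{d+1}(1+L|x-y|)^{-d-1}$, using the dyadic decomposition of $h$ and Theorem~\ref{kernel}. It then interchanges the order of summation and integration, so that for fixed $y$ one sums the kernel's tail over all regions $X_j$; this sum is bounded by $CL$ uniformly in $y$ by a Riesz-sum argument (the split into $J$ and $J'$). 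In other words, the tails are controlled \emph{collectively} as they are summed over all $j$, not absorbed one region at a time. The extra smoothing by $v_\varepsilon$ is what permits the kernel representation of $STP(x)$, since $S,T$ must be genuine smooth vector fields for Theorem~\ref{greiner} and Theorem~\ref{kernel} to apply; your direct Hessian approach does not need it, but it runs into the tail problem instead. If you want to salvage your outline, you would have to carry the tail term $CL^{d+1}R_0^{-S}\int\|\nabla P\|\,d\mu$ in both (a) and (b), and then sum the near parts and far parts separately exactly as the paper does, at which point the two arguments essentially coincide.

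A smaller remark: you invoke a Bernstein inequality for the Hodge Laplacian on $1$-forms. While $d$ does commute with the Hodge Laplacian, so $dP$ lies in the right spectral window, the paper never introduces the Hodge Laplacian — it works entirely with vector fields applied to scalar eigenfunctions and relies on Theorem~\ref{greiner} (heat-kernel gradient bounds for the scalar Laplacian) to produce the factor of $L$ per covariant derivative. This avoids a parallel Filbir--Mhaskar theory for vector-valued kernels, which you would otherwise have to develop.
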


\begin{theorem}\label{final}
There exists a constant $C_{\mathcal{M}}$ such that for each $N\geq
C_{\mathcal{M}}L^{d}$ there exists an $L$-design in $\mathcal{M}$ with $N$ nodes.
\end{theorem}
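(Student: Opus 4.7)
The plan is to implement the Bondarenko--Radchenko--Viazovska construction from \cite{BRV} (later extended to algebraic manifolds in \cite{EMOC}), which becomes available in the general Riemannian setting precisely thanks to Theorems \ref{partition}, \ref{MZ polys}, and \ref{MZ gradients}; the topological engine is Theorem \ref{brouwer}.

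Fix $N\geq C_{\mathcal M}L^{d}$ with $C_{\mathcal M}$ to be chosen large. By Theorem \ref{partition}, obtain a partition $\{R_{j}\}_{j=1}^{N}$ with inscribed geodesic balls $Y_{j}=B(y_{j},c_{1}N^{-1/d})$. Parameterize perturbed configurations by $x_{j}(v)=\exp_{y_{j}}(v_{j})$ for $v=(v_{1},\dots,v_{N})\in H:=\bigoplus_{j=1}^{N}T_{y_{j}}\mathcal M$, restricted to the open bounded set $\Omega=\{v\in H:\|v_{j}\|<c_{1}N^{-1/d}\text{ for all }j\}$, on which $x_{j}(v)\in Y_{j}\subseteq R_{j}$ so that both Marcinkiewicz--Zygmund inequalities apply to the sample set $\{x_{j}(v)\}$. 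Encode the quadrature error by $Q_{v}\in\Pi_{L}^{0}$, the Riesz representative of the functional $P\mapsto\tfrac{1}{N}\sum_{j}P(x_{j}(v))$; equivalently $Q_{v}(y)=\tfrac{1}{N}\sum_{j}K_{L}(x_{j}(v),y)$ where $K_{L}$ is the reproducing kernel of $\Pi_{L}^{0}$. Then $\{x_{j}(v)\}$ is an $L$-design if and only if $Q_{v}\equiv 0$.

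Following \cite{BRV}, I would construct a continuous map $f:\overline{\Omega}\to H$ with two properties: (a) every zero of $f$ in $\Omega$ forces $Q_{v}=0$, and (b) $\langle v,f(v)\rangle>0$ for $v\in\partial\Omega$. The map is built from the pullback of $\nabla Q_{v}(x_{j}(v))$ along $d\exp_{y_{j}}(v_{j})$ together with a correction, chosen so that (a) follows from the quantitative injectivity statement that Theorem \ref{MZ gradients} encodes: if $LN^{-1/d}$ is small enough, the vanishing of $\nabla P$ at the sample points $\{y_{j}\}$ forces $P\equiv 0$ for every $P\in\Pi_{L}^{0}$. For (b), a first-order Taylor expansion of $Q_{v}$ along the geodesic $t\mapsto\exp_{y_{j}}(tv_{j})$ (using the Gauss lemma) yields
\[
\langle v,f(v)\rangle\geq\tfrac{1}{N}\sum_{j=1}^{N}\bigl(Q_{v}(x_{j}(v))-Q_{v}(y_{j})\bigr)-R(v),
\]
with $R(v)$ a second-order Taylor remainder. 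Now $\tfrac{1}{N}\sum_{j}Q_{v}(x_{j}(v))=\|Q_{v}\|_{L^{2}}^{2}$ by construction, and $\int_{\mathcal M}Q_{v}\,d\mu=0$; Theorem \ref{MZ polys} then gives $|\tfrac{1}{N}\sum_{j}Q_{v}(y_{j})|\leq CLN^{-1/d}\|Q_{v}\|_{L^{1}}$, while $R(v)$ is controlled via Theorem \ref{MZ gradients} combined with the Bernstein-type inequality $\int\|\nabla Q_{v}\|\lesssim L\int|Q_{v}|$. For $C_{\mathcal M}$ large enough, the positive main term $\|Q_{v}\|_{L^{2}}^{2}$ dominates the errors on $\partial\Omega$, verifying (b). Theorem \ref{brouwer} then yields $v^{\ast}\in\Omega$ with $f(v^{\ast})=0$, producing by (a) the desired $L$-design $\{x_{j}(v^{\ast})\}_{j=1}^{N}$.

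The main obstacle I expect is two-fold. First, the design of $f$ so that (a) is unconditional and not merely a critical-point statement: this requires exploiting Theorem \ref{MZ gradients} as a genuine quantitative surjectivity result for the linearization of the map $v\mapsto Q_{v}$, and combining it with a pseudo-inverse so that the zero set of $f$ really coincides with the zero set of $Q$. Second, the careful estimation of the remainder $R(v)$, which involves the Hessian of $Q_{v}$, the modulus of continuity of $d\exp_{y_{j}}$, and the curvature of $\mathcal M$, all of which must be bounded uniformly in $v\in\overline{\Omega}$ and in the appropriate mixed $L^{1}$/$L^{2}$ norm so that Theorem \ref{MZ gradients} can close the argument in the regime $LN^{-1/d}\ll 1$. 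Once these two pieces are in place, the proof is structurally identical to that of \cite{BRV} and \cite{EMOC}.
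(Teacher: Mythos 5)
Your proposal takes a genuinely different route from the paper's, and that route has a gap that I do not see how to close. The paper carries out the Brouwer degree argument inside the finite-dimensional polynomial space $\Pi_L^0$: the open set is $\Omega=\{P\in\Pi_L^0:\int_{\mathcal M}\|\nabla P\|\,d\mu<1\}$, so on $\partial\Omega$ one has the \emph{normalization} $\int_{\mathcal M}\|\nabla P\|\,d\mu=1$. The map $F$ sends a polynomial $P$ to the configuration obtained by flowing the initial points $x_j\in R_j$ along a normalized version of $\nabla P$ for time $\asymp N^{-1/d}$, and the positivity $\langle P,Z(F(P))\rangle>0$ holds on $\partial\Omega$ because the flow increases $\frac1N\sum_j P(x_j(P))$ by an amount comparable to $\int\|\nabla P\|\,d\mu=1$, which dominates the Marcinkiewicz--Zygmund error terms once $LN^{-1/d}$ is small.

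You instead run the Brouwer argument in the tangent configuration space $H=\bigoplus_j T_{y_j}\mathcal M$ of dimension $Nd$, with $\Omega$ a product of balls $\{\|v_j\|<c_1N^{-1/d}\}$. The fatal problem is the positivity condition on $\partial\Omega$. You assert that the leading term $\|Q_v\|_{L^2}^2$ dominates the errors, but on the boundary of \emph{your} $\Omega$ there is no lower bound on $\|Q_v\|_{L^2}$ whatsoever: the boundary only says some $\|v_j\|$ is maximal, which imposes no constraint on the size of the node polynomial $Q_v$. In fact $Q_v$ can be arbitrarily small, or even vanish, at some $v\in\partial\Omega$, at which point $\langle v,f(v)\rangle>0$ necessarily fails. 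Without a positive lower bound for the main term on the boundary, there is nothing for the error terms to be small in comparison with, and Theorem \ref{brouwer} cannot be applied. The paper's choice of $\Omega$ in polynomial space is not a cosmetic modelling decision: it is precisely what supplies the normalization that makes the boundary estimate go through, and the ``correction'' you gesture at in the definition of $f$ would have to manufacture a positive quantity that is simply not present in the geometry of your $\partial\Omega$. A secondary, smaller inaccuracy: you invoke Theorem \ref{MZ polys} to bound $|\frac1N\sum_j Q_v(y_j)|$ by $CLN^{-1/d}\|Q_v\|_{L^1}$, but that theorem controls averages of $|P|$, not of $P$ with its sign; the correct estimate, which the paper derives directly from a Lipschitz bound on each $R_j$, is in terms of $\int_{\mathcal M}\|\nabla Q_v\|\,d\mu$. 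This, however, is minor next to the missing boundary normalization above.
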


In the proof of Proposition \ref{sotto} we mentioned Theorem 2.12 in \cite{quadrature}. This is a result on 
numerical integration for functions in Sobolev spaces and it says that 
if $\{x_j\}_{j=1}^N$ is an $L$-design on a compact Riemannian manifold $\mathcal M$, then for every $1\le p\le +\infty$
and for every $\alpha>d/p$ there exists a constant $\beta>0$ such that for every $f$ in the Sobolev space
$W^{\alpha,p}(\mathcal M)$
\[
\left|\int_{\mathcal M} f(x)d\mu(x)-\frac 1N\sum_{j=1}^Nf(x_j)\right|\le \beta L^{-\alpha}\|f\|_{\alpha,p}.
\]
By the above Theorem \ref{final}, if $\mathcal M$ is a connected compact oriented $d$-dimensional Riemannian manifold, 
for every positive integer $N$, setting $L=(N/C_{\mathcal M})^{1/d}$ there indeed exists an $L$-design on $\mathcal M$ 
consisting of $N$ nodes, and this immediately gives the following result on the worst case error in numerical integration
\begin{corollary}
For every $1\le p\le +\infty$
and for every $\alpha>d/p$ there exists a constant $\beta>0$ such that 
for every $N\ge 1$ there exists a collection of points $\{x_j\}_{j=1}^N$ such that for every $f\in W^{\alpha,p}(\mathcal M)$
\[
\left|\int_{\mathcal M} f(x)d\mu(x)-\frac 1N\sum_{j=1}^Nf(x_j)\right|\le \beta C_{\mathcal M}^{\alpha/d} N^{-\alpha/d}\|f\|_{\alpha,p}.
\]
\end{corollary}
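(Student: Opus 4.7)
The approach is a direct combination of Theorem~\ref{final} (existence of $L$-designs with exactly $N$ nodes whenever $N\ge C_{\mathcal M}L^d$) with the Sobolev-space quadrature bound from \cite[Theorem~2.12]{quadrature} stated in full generality in the paragraph preceding the Corollary. No additional ingredient is needed; the Corollary is essentially a repackaging of these two results.

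Concretely, given $N\ge 1$, I would choose $L:=(N/C_{\mathcal M})^{1/d}$, which is the largest value of $L$ compatible with the hypothesis $N\ge C_{\mathcal M}L^d$ of Theorem~\ref{final}. Applying that theorem produces an $L$-design $\{x_j\}_{j=1}^N\subset\mathcal M$ with this many nodes. Feeding this specific design into the quadrature inequality of \cite[Theorem~2.12]{quadrature} yields, for every $f\in W^{\alpha,p}(\mathcal M)$ with $\alpha>d/p$,
\[
\left|\int_{\mathcal M}f(x)\,d\mu(x)-\frac 1N\sum_{j=1}^N f(x_j)\right|\le \beta L^{-\alpha}\|f\|_{\alpha,p}.
\]
Substituting $L^{-\alpha}=C_{\mathcal M}^{\alpha/d}N^{-\alpha/d}$ gives precisely the claimed bound, with the same constant $\beta$ provided by \cite{quadrature} (and a multiplicative factor $C_{\mathcal M}^{\alpha/d}$).

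There is essentially no obstacle, so the only points I would double-check are minor technicalities. First, $(N/C_{\mathcal M})^{1/d}$ need not be an integer; this is harmless, since $\Pi_L$ and the estimate of \cite{quadrature} are defined for all real $L\ge 0$. Second, for small $N$ the chosen $L$ may fall below any reasonable threshold, but since only finitely many such $N$ occur one may simply enlarge $\beta$ to absorb them (for each such $N$ an arbitrary $N$-tuple of points satisfies the inequality by a trivial bound on the quadrature error via $\|f\|_{\infty}\lesssim\|f\|_{\alpha,p}$, which holds since $\alpha>d/p$ makes the Sobolev embedding into continuous functions available). With these observations the proof is complete.
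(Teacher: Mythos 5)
Your proof is correct and matches the paper's own argument essentially verbatim: set $L=(N/C_{\mathcal M})^{1/d}$, invoke Theorem~\ref{final} to obtain an $L$-design with $N$ nodes, and plug it into the quadrature bound of \cite[Theorem~2.12]{quadrature}. The minor technicalities you flag are correctly identified as harmless.
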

By Theorem 2.16 in \cite{quadrature}, the exponent $-\alpha/d$ is best possible. This result should be compared 
with Corollary 6.3, Corollary 6.4 and Example 6.5 in \cite{Chen}, where the authors prove that if $1<p\le +\infty$ and $d/p<\alpha<d$, then 
a random choice of nodes $x_j\in R_j$ gives the desired decay rate $N^{-\alpha/d}$ for the worst 
case error in numerical integration if and only if $\alpha<d/2+1$. See also \cite{Brauchart1,Brauchart2} for previous results 
in the case of the sphere.

We wish to thank Luca Brandolini, Leonardo Colzani and Giancarlo Travaglini for several discussions on
this subject. 

\section{Introduction to Riemannian manifolds}
The following are well known facts about manifolds. The interested reader can find all details in
\cite{BGM, DC}.

A differentiable manifold of dimension $d$ is a set $\mathcal M$ and a family of injective 
maps $x_\alpha:U_\alpha\subset\mathbb R^d\to\mathcal M$ such that
\begin{enumerate}
\item The $U_\alpha$'s are open and $\cup_{\alpha}x_\alpha(U_\alpha)=\mathcal M$.
\item For any pair $\alpha,\beta$ with $x_\alpha(U_\alpha)\cap x_\beta(U_\beta)=W\neq\emptyset$,
the sets $x_\alpha^{-1}(W)$ and $x_\beta^{-1}(W)$ are open and the maps $x_\beta^{-1}\circ x_\alpha$
are $\mathcal C^\infty$.
\item The family $\{(U_\alpha,x_\alpha)\}$ is maximal relative to the above conditions.
\end{enumerate}
Each $(U_\alpha,x_\alpha)$ is called local chart, and a family $\{(U_\alpha,x_\alpha)\}$ satisfying (1) and (2) is called differentiable structure.

This induces a natural topology on $\mathcal M$. A set $A$ is open in $\mathcal M$ if and only if
$x_\alpha^{-1}(A\cap x_\alpha(U_\alpha))$ is open in $\mathbb R^d$ for all $\alpha$.
We will assume that with this topology $\mathcal M$ is a Hausdorff space with a countable basis.

We also say that a differentiable manifold $\mathcal M$ is orientable if 
\begin{itemize}
\item[(4)] For every pair $\alpha, \beta$ with $x_\alpha(U_\alpha)\cap x_\beta(U_\beta)=W\neq\emptyset$
the differential of the change of coordinates $x_\beta^{-1}\circ x_\alpha$ has positive determinant.
\end{itemize}

A map $f:\mathcal N\to\mathcal M$ between two differentiable manifolds is called differentiable in $p\in\mathcal N$ if for every local chart $(V,y)$ at $f(p)$ there exists a local chart $(U,x)$ at $p$ such that 
$f(x(U))\subset y(V)$ and the map $y^{-1}\circ f \circ x$ is $\mathcal C^{\infty}$. We say that $f$ is differentiable
in an open set of $\mathcal N$ if it is differentiable at all points of this open set.

A differentiable map $\alpha$ from the interval $(-\varepsilon,\varepsilon)\subset\mathbb R$ to $\mathcal M$ will be called a (differentiable) curve in $\mathcal M$. Suppose that $\alpha(0)=p$. Call 
$\mathcal D(\mathcal M)$ the set of functions on $\mathcal M$ differentiable in $p$. The tangent vector 
to $\alpha$ at $t=0$ is the function $\alpha'(0):\mathcal D(\mathcal M)\to \mathbb R$ given by
\[
\alpha'(0)f=\left.\frac{d(f\circ\alpha)}{dt}\right|_{t=0},\quad f\in\mathcal D(\mathcal M).
\]
A tangent vector at $p$ is the tangent vector at $t=0$ of a1 curve like the one above. The set of all
tangent vectors to $\mathcal M$ at $p$ will be indicated with $T_p\mathcal M$. This is a vector space of dimension $d$ called tangent space of $\mathcal M$ at $p$,
and the choice of a local chart $(U,x)$ around $p$ determines a natural basis of $T_p\mathcal M$ given by 
\[
\left\{\left(\frac{\partial}{\partial x_i}\right)_{p}\right\}_{i=1}^d
\]
where setting $x^0=x^{-1}(p)$, $(\partial/\partial x_i)_{p}$ is the tangent vector to the curve 
$x(x^0_1,\ldots,x^0_i+t,\ldots,x^0_d)$ at $t=0$.
The set $T\mathcal M=\{(p,v):p\in\mathcal M,\,v\in T_p\mathcal M\}$ can be given a differentiable structure 
that makes it a differentiable manifold of dimension $2d$ called tangent bundle. The local charts are 
$(U_\alpha\times \mathbb R^d, y_\alpha)$, where 
\[
y_\alpha(x^\alpha_1,\ldots,x^\alpha_d,u_1,\ldots,u_d)=(x_\alpha(x^\alpha_1,\ldots,x^\alpha_d),
\sum_{i=1}^du_i\frac{\partial}{\partial x_i^\alpha}).
\]
A vector field $X$ on a differentiable manifold $\mathcal M$ is a map from $\mathcal M$
to the tangent bundle $T\mathcal M$ such that $X(p)=(p,v)$ for some $v\in T_p\mathcal M$. We call $\mathcal X(\mathcal M)$
the space of differentiable vector fields on $\mathcal M$.

A Riemannian metric on a differentiable manifold $\mathcal M$ is a correspondence which associates to each point $p\in\mathcal M$ an inner product $\langle\cdot,\cdot\rangle_p$ (that is, a symmetric, bilinear, positive definite form) on the tangent space $T_p\mathcal M$ which varies differentiably in the sense that 
if $(U,x)$ is a local chart around $p$ and if $q=x(x_1,\ldots,x_d)$ then 
\[
g_{ij}(x_1,\ldots,x_d)=\left\langle\left(\frac\partial{\partial x_i}\right)_q,\left(\frac\partial{\partial x_j}\right)_q\right\rangle_q
\]
is a differentiable function on $U$. A differentiable manifold with a given Riemannian metric will be called 
a Riemannian manifold. If $v$ is a tangent vector to $\mathcal M$ at $p$, we set $\|v\|^2_p=\langle v,v\rangle_p$. The length of a differentiable curve $\alpha$ from the interval $[a,b]$ to $\mathcal M$
is defined as
\[
\int_a^b\|\alpha'(t)\|_{\alpha(t)}dt.
\]

We define the distance $|p-q|$ between two points in a Riemannian manifold, $p,\,q\in\mathcal M$, as 
the infimum of the lengths of all the differentiable curves joining $p$ and $q$. This is indeed a distance,
and it turns $\mathcal M$ into a metric space that has the same topology as the manifold's natural topology.

If $\mathcal M$ is a compact Riemannian manifold, then for any two points $p$ and $q$ in $\mathcal M$
there exists at least one differentiable curve $\alpha$ joining $p$ and $q$ that realizes the infimum of the lengths 
of all the differentiable curves joining $p$ and $q$. Furthermore, the covariant derivative of $\alpha'$ along $\alpha$ equals zero
(curves that satisfy this property are called {\it geodesics}). We refer the reader to \cite{DC} for the precise definition of covariant derivative,
here it suffices to recall that if $\alpha$ is a geodesic then $\|\alpha'(t)\|_{\alpha(t)}$ is constant, and one can normalize $\alpha$ in such a way that
$\|\alpha'(t)\|_{\alpha(t)}=1$.

Let $(U,x)$ be a local chart where the metric has local representation given by  $g_{ij}(x_1,\ldots,x_d)$. The positive measure $d\nu_{(U,x)}=\sqrt{\det (g_{ij})}dx_1\ldots dx_d$ on $U$ induces a positive
measure $d\mu_{(U,x)}$ on $x(U)$ given by
\[
\int_{x(U)} fd\mu_{(U,x)}=\int_U f\circ x d\nu_{(U,x)}.
\]
If $(U,x)$ and $(V,y)$ are two local charts with $U\cap V=W\neq\emptyset$, then one can show that
$d\mu_{(U,x)}$ coincides with $d\mu_{(V,y)}$ on $W$. By a standard partition of unit argument, there exists 
a unique measure $d\mu$ on $\mathcal M$ that coincides with $d\mu_{(U,x)}$ on $U$
for all local charts $(U,x)$. This measure is called canonical measure on $\mathcal M$.

It can be shown that there exist two positive constants $c_4$ and $c_5$ such that for any point $x\in\mathcal M$ and for any radius $r\le\mathrm{diam}(\mathcal M)$, the measure of the ball 
$B(x,r)=\{y\in\mathcal M:|x-y|<r\}$ satisfies  the inequalities
\begin{equation}
\label{bolle}
c_4 r^d\leq\mu(B(x,r))\leq c_5 r^d.
\end{equation}
It follows easily that there exists a positive constant $c_6$ such that if $f:[0,+\infty)\to[0,+\infty)$ is a decreasing function and if $x\in\mathcal M$ then 
\begin{equation}
\label{radiale}
\int_{\mathcal M}f(|x-y|)d\mu(y)\le c_6\int_{0}^{+\infty}f(t)t^{d-1}dt.
\end{equation}	

For any $f\in\mathcal D(\mathcal M)$ we define the gradient of $f$ as a vector field $\nabla f$ on the Riemannian manifold $\mathcal M$ given by 
\[
\langle\nabla f(p),v\rangle_p=vf,\quad p\in\mathcal M,\,v\in T_p\mathcal M.
\]
In local coordinates, the gradient is given by the formula
\[
\nabla f(p)=\sum_{j=1}^d\sum_{i=1}^dg^{ij}\left(\frac{\partial }{\partial x_i} f\right)\frac{\partial }{\partial x_j},
\]
where $g^{ij}$ are the entries of the inverse matrix of $g_{ij}$. It follows from the definition that 
\[
\|\nabla f\|_p=\sup Xf
\]
where the supremum is taken over all the differentiable vector fields $X$ with norm $\|X\|_p\le 1$.

If $f\in\mathcal D(\mathcal M)$ and $\alpha:[0,|p-q|]\to \mathcal M$ is a normalized geodesic joining $p$ and $q$, then 
\begin{align}\label{geodesic}
\nonumber
 |f(p)-f(q)|&=\left|\int_0^{|p-q|}\frac{d}{dt}(f(\alpha(t)))dt\right|=\left|\int_{0}^{|p-q|}\langle\nabla f(\alpha(t)),\,\alpha'(t)\rangle dt\right|\\
&\le |p-q|\sup_{t}\|\nabla f(\alpha(t))\|. 
\end{align}

One can also define the divergence of a differentiable vector field $X$. To keep this presentation simple, we only 
give the definition in local coordinates, that is
\[
\mathrm{div} (X)=\frac{1}{\sqrt{\mathrm{det}(g_{ij})}}\sum_{k=1}^d\frac{\partial}{\partial x_k}\left(\sqrt{\mathrm{det}(g_{ij}})X_k\right),\quad X=\sum_{k=1}^d X_k\frac{\partial}{\partial x_k}.
\]

Finally, the Laplace-Beltrami operator $\triangle:\mathcal D(\mathcal M)\to\mathcal D(\mathcal M)$ is defined as
\[
\triangle f=-\mathrm{div}(\nabla f).
\]
Let $\mathcal M$ be a compact oriented Riemannian manifold and let $f\in\mathcal D(\mathcal M)$ and $X$ a differentiable vector field.  The following Green identity (see \cite[page 267]{L}) holds
\begin{equation}
\label{Green}
\int_{\mathcal M} \langle \nabla f, X\rangle d\mu=-\int_{\mathcal M}f \mathrm{div}(X)d\mu.
\end{equation}
Let now $\mathcal M$ be a connected compact Riemannian manifold. The Laplace-Beltrami operator 
$\triangle$ is self adjoint and positive definite. The eigenvalues of the Laplace-Beltrami operator
are the values $\lambda^2$ such that the equation 
\[
\triangle f=\lambda^2 f
\]
has a solution $f\in\mathcal D(\mathcal M)$, $f\neq 0$, called eigenfunction. These eigenvalues form a discrete 
sequence of non-negative numbers diverging to $+\infty$. The eigenfunctions associated with a given eigenvalue 
$\lambda^2$ form a finite dimensional subspace of $\mathcal D(\mathcal M)$ called eigenspace associated with $\lambda^2$. The dimension of the eigenspace will be called multiplicity of the eigenvalue. Different eigenspaces are orthogonal to each other in the Hilbert space $L^2(\mathcal M,d\mu)$,
and the direct sum of all the eigenspaces is dense in $\mathcal D(\mathcal M)$ in the topology of the uniform convergence
(and, a fortiori, in $L^2(\mathcal M,d\mu)$). The value $\lambda^2=0$ is an eigenvalue and the associated eigenspace is the $1$-dimensional space consisting of the constant functions on $\mathcal M$.
It is therefore convenient to list the eigenvalues as a non decreasing sequence
\[
\lambda_0^2=0<\lambda_1^2\le\lambda_2^2\le\ldots
\]
where the repetitions correspond to eigenvalues with multiplicity greater than $1$.
We can therefore associate each eigenvalue $\lambda_k^2$ with an eigenfunction $\varphi_k$
in such a way that $\{\varphi_k\}_{k=0}^{+\infty}$ forms an orthonormal basis of $L^2(\mathcal M,d\mu)$.

We conclude this section with a result concerning certain estimates on the derivatives of the heat kernel. See \cite[Theorem 1.4.3 and the following remarks]{Gr}, and \cite[Theorem 5.5]{Ko} for an extension to general elliptic operators on manifolds of bounded geometry. See also 
\cite[Chapter 3.E]{BGM} for details on the Minakshisundaram-Pleijel asymptotic development 
of the heat kernel.
\begin{theorem}\label{greiner}
Let $\mathcal M$ be a compact Riemannian manifold of dimension $d$.
Let $X_1,\ldots,\,X_\ell$ and $Y_1,\ldots,\,Y_m$ be differentiable vector fields on $\mathcal M$
such that $\|X_j\|_x\le 1$ and $\|Y_i\|_x\le 1$ for all $x\in\mathcal M$, for all $j=1,\ldots,\ell$ and
for all $i=1,\ldots,\,m$.
Then there exist two positive constants $c_7=c_7(\ell,m)$ and $c_8=c_8(\ell,m)$ depending only on 
$\ell,\,m$ (and on $\mathcal M$) such that for all $t\in[0,1]$ and for all $x,\,y\in \mathcal M$,
\[
\left|\sum_{k=0}^{+\infty}\exp(-\lambda_k^2t)X_1\ldots X_\ell\varphi_k(x)Y_1\ldots Y_m\varphi_k(y)\right|
\le
c_7t^{-\frac{d+\ell+m}2}\exp\left(-c_8\frac{|x-y|^2}{t}\right).
\]
\end{theorem}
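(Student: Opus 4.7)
Plan. The sum $K_t(x,y) := \sum_{k=0}^{\infty} e^{-\lambda_k^2 t}\varphi_k(x)\varphi_k(y)$ is precisely the heat kernel of $\mathcal M$ (via the spectral resolution of $\Delta$), and the claimed bound is the classical Gaussian upper estimate for its mixed space derivatives. The approach I would take is the Minakshisundaram--Pleijel parametrix construction, as presented in \cite{BGM}, combined with a Duhamel remainder analysis and the Davies exponential perturbation trick for the global Gaussian decay.

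The first step is to construct, inside the injectivity radius of $\mathcal M$, a parametrix
\[
H_N(x,y,t) := \frac{\chi(|x-y|)}{(4\pi t)^{d/2}}\, e^{-|x-y|^2/(4t)} \sum_{j=0}^{N} u_j(x,y)\, t^j,
\]
where the smooth coefficients $u_j$ are defined recursively by the standard transport equations along minimizing geodesics so that $(\partial_t+\Delta_y) H_N$ has small order in $t$, and $\chi$ is a cutoff supported inside the injectivity radius. Applying the vector fields directly to $H_N$, each derivative hits either the Gaussian factor (producing a term of size $O(|x-y|/t)$) or a smooth factor or power of $t$ (producing at most $O(t^{-1/2})$). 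Absorbing powers of $|x-y|$ by $|x-y|^r e^{-|x-y|^2/(4t)} \le C_r t^{r/2} e^{-|x-y|^2/(8t)}$ and tracking the $t$-exponents yields $|X_1\cdots X_\ell Y_1\cdots Y_m H_N(x,y,t)| \le c\, t^{-(d+\ell+m)/2} e^{-c'|x-y|^2/t}$ for $t\in(0,1]$.

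The second step is to bound the remainder. Setting $R_N := (\partial_t+\Delta_y) H_N$, by construction $R_N$ and its derivatives up to order $\ell+m$ are controlled by $C_N t^{N-d/2-(\ell+m)/2}$ times a Gaussian in $|x-y|/\sqrt t$, and choosing $N$ large in terms of $\ell+m$ makes this remainder as small as required. Duhamel's formula
\[
K_t(x,y) - H_N(x,y,t) = -\int_0^t \int_{\mathcal M} K_{t-s}(x,z)\, R_N(z,y,s)\, d\mu(z)\, ds
\]
transfers the derivative bounds to $K_t - H_N$, using the scalar Gaussian estimate $|K_\tau(x,z)| \le C\tau^{-d/2} e^{-c|x-z|^2/\tau}$ (itself derived from Nash ultracontractivity plus Davies' perturbation) together with the convolution inequality $\int_{\mathcal M} e^{-c|x-z|^2/\tau} e^{-c|z-y|^2/s}\, d\mu(z) \le C e^{-c'|x-y|^2/(\tau+s)}$, which holds on the compact manifold up to constants.

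The main obstacle is keeping the Gaussian decay in $|x-y|$ alive through repeated differentiation, especially inside the Duhamel integral, where derivatives in $x$ hit $K_{t-s}(x,z)$ and derivatives in $y$ hit $R_N(z,y,s)$, producing two Gaussians in different variables that must be reassembled into one Gaussian in $|x-y|$. This is exactly what the convolution inequality above accomplishes, and it is the reason one must first establish the scalar Gaussian bound by the Davies method before attempting the derivative estimates. Outside the injectivity radius, $|x-y|$ is bounded below by a positive constant, so the claimed exponential factor $e^{-c|x-y|^2/t}$ is automatic, and derivative control there is obtained from the semigroup identity $K_t = K_{t/3}\ast K_{t/3}\ast K_{t/3}$, distributing the $X_i$'s on the first factor and the $Y_j$'s on the third and invoking the previous steps at time $t/3$.
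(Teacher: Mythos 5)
The paper does not prove this theorem at all: it simply cites Greiner \cite{Gr} (Theorem 1.4.3), Kordyukov \cite{Ko} (Theorem 5.5), and Berger--Gauduchon--Mazet \cite{BGM} (Chapter 3.E). So there is no ``paper's own proof'' to compare against; what one can say is that your parametrix-plus-Duhamel-plus-Davies outline is exactly the line of argument those references pursue, and in that sense you have correctly identified the right approach.

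There is, however, a genuine circularity in the argument as you have written it. In your Duhamel step you let the $x$-derivatives $X_1\cdots X_\ell$ land on $K_{t-s}(x,z)$ inside the integral $\int_0^t\int_{\mathcal M} K_{t-s}(x,z)\,R_N(z,y,s)\,d\mu(z)\,ds$ and then invoke a Gaussian bound for $X_1\cdots X_\ell K_{t-s}(x,z)$. But that is precisely (the $m=0$ case of) the estimate you are trying to prove, and the scalar Gaussian bound on $K$, which is all you have established at that point, does not control its spatial derivatives. The standard way to break the circle is the device you mention only at the very end for points outside the injectivity radius, used instead globally: first prove the one-sided case $m=0$ by writing Duhamel with the remainder in the \emph{$x$}-variable, so that the $X$'s hit the explicit $R_N$ and only the scalar bound on $K$ is needed; then deduce the mixed case via the semigroup identity $K_t(x,y)=\int_{\mathcal M}K_{t/2}(x,z)K_{t/2}(z,y)\,d\mu(z)$, distributing the $X_i$'s on the first factor and the $Y_j$'s on the second. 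Alternatively, one iterates Duhamel to $K_t=\sum_{j\le M}(-1)^j H_N * R_N^{*j}+(-1)^{M+1}K*R_N^{*(M+1)}$ and takes $M$ large enough that the last term, with its very high power of $t$, can absorb a crude non-Gaussian spectral bound on the derivatives of $K$. Either fix is standard, but without one of them the Duhamel step as stated assumes what it sets out to prove.
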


\section{Estimates on summability kernels}
Here we recall certain definitions and results concerning general summability kernels
for Bessel systems, following \cite{FM2010}. 
\begin{definition}
A system $\{\phi_k\}_{k=0}^{+\infty}\subset L^2(\mathcal M)$ will be called a generalized Bessel system if for any $g\in\mathcal D(\mathcal M)$,
\[
\mathcal N(g):=\sum_{k=0}^{+\infty}\left|\int_{\mathcal M}g{\phi_k} d\mu\right|^2<+\infty.
\]
\end{definition}

Any Bessel system, that is a system $\{\phi_k\}_{k=0}^{+\infty}$ such that 
for all $f\in L^2({\mathcal M})$ one has $\sum_{k=0}^{+\infty}\left|\int_{\mathcal M}f{\phi_k} d\mu\right|^2\le c\|f\|_2^2$, is clearly a generalized Bessel system according to the above definition.
In particular, any orthonormal system is a generalized Bessel system.
We will also use the following type of generalized Bessel systems.
Let $X$ be a differentiable vector field on $\mathcal M$ and set $\phi_k=X\varphi_k$, where $\{\varphi_k\}_{k=0}^{+\infty}$ are the eigenfunctions of the
Laplace-Beltrami operator described before. 
By Green's formula (\ref{Green}),
for any $g\in\mathcal D(\mathcal M)$
\[
\sum_{k=0}^{+\infty}\left|\int_{\mathcal M}g ({X\varphi_k}) d\mu\right|^2
=\sum_{k=0}^{+\infty}\left|-\int_{\mathcal M}{\mathrm{div}( gX)} {\varphi_k} d\mu\right|^2=\|\mathrm{div}( gX)\|_2^2<+\infty.
\]  
Similarly, if $X_1,\, X_2$ are differentiable vector fields on $\mathcal M$ and we set $\phi_k=X_1X_2\varphi_k$, then
\[
\sum_{k=0}^{+\infty}\left|\int_{\mathcal M}g ({X_1X_2\varphi_k}) d\mu\right|^2
=\|\mathrm{div}( \mathrm{div}(gX_1)X_2)\|_2^2<+\infty,
\]
and so on for any number of vector fields.

Here is the main result of this section. 

\begin{theorem}[Theorem 2.1 in \cite{FM2010}] \label{kernel} Let $\{\phi_k\}$, $\{\psi_k\}$ be generalized Bessel systems composed by continuous functions,
and assume that there exist positive constants $\kappa_1,\,\kappa_2,$ $\kappa_3,\,\kappa_4,\,A_1,\,A_2,\,A_3$ such that for all $t\in(0,1]$ and $x,\,y\in\mathcal M$
\begin{align*}
\sum_{k=0}^{+\infty}\exp(-\lambda_k^2t)|\phi_k(x)|^2&\le\kappa_1 t^{-A_1/2},\\
\sum_{k=0}^{+\infty}\exp(-\lambda_k^2t)|\psi_k(x)|^2&\le\kappa_2 t^{-A_2/2},\\
\left|\sum_{k=0}^{+\infty}\exp(-\lambda_k^2t)\phi_k(x){\psi_k(y)}\right|&\le\kappa_3 t^{-A_3}\exp(-\kappa_4|x-y|^2/t).
\end{align*}
Let $K=(A_1+A_2)/2$ and $S>\max\{K,\,d\}$ integer. Let $H:\mathbb R\to \mathbb R$
be an even function supported on $[-1,\,1]$ with continuous derivatives up to order $S$. 
Then there exists a positive constant $C_9$ such that for all $x,\,y\in\mathcal M$ and for all $L>0$,
\[
\left|\sum_{k=0}^{+\infty}H(\lambda_k/L)\phi_k(x){\psi_k(y)}\right|\le C_9\frac{L^K}{(1+L|x-y|)^{S}}.
\]
\end{theorem}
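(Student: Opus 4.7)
Write $K_L(x,y):=\sum_k H(\lambda_k/L)\phi_k(x)\psi_k(y)$. The plan is to estimate $K_L$ in two stages: the near-diagonal regime $L|x-y|\le 1$ by a direct Cauchy--Schwarz argument against hypotheses (1) and (2), and the off-diagonal regime $L|x-y|>1$ by a heat-kernel representation of $H$ combined with the Gaussian bound (3).

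In the near-diagonal regime the target reduces to $|K_L(x,y)|\lesssim L^K$. Since $H$ vanishes outside $[-1,1]$, only indices with $\lambda_k\le L$ contribute, and on this range $e^{-\lambda_k^2/L^2}\ge e^{-1}$. Choosing $t=L^{-2}$ in hypothesis (1) therefore gives $\sum_{\lambda_k\le L}|\phi_k(x)|^2\le e\kappa_1 L^{A_1}$, and hypothesis (2) similarly yields the analogous bound for $\psi_k(y)$; Cauchy--Schwarz then produces $|K_L(x,y)|\le e\|H\|_\infty\sqrt{\kappa_1\kappa_2}\,L^K$, which is the desired estimate in this regime since $(1+L|x-y|)^S\le 2^S$.

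In the off-diagonal regime the $S$-fold smoothness of $H$ must be turned into spatial decay. The strategy is to write $H(\lambda/L)$ as a superposition of heat-semigroup symbols $e^{-\lambda^2 s/L^2}$, insert into the spectral sum, swap sum and integral, and apply the Gaussian bound (3) inside the integral. Concretely, with $\widetilde H(u):=H(\sqrt u)$ (legitimate by the evenness of $H$) one seeks a representation
\[
\widetilde H(u)=\int_0^\infty W(s)\,e^{-us}\,ds
\]
whose weight $W$ inherits a suitable polynomial decay from the $S$ derivatives of $H$ (morally $|W(s)|$ should decay fast enough at $s\to\infty$ to translate $S$ derivatives into $S$ orders of spatial decay). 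Substituting $u=\lambda_k^2/L^2$ and using hypothesis (3) at time $t=s/L^2$ gives
\[
|K_L(x,y)|\le\kappa_3 L^{2A_3}\int_0^\infty|W(s)|\,s^{-A_3}\,e^{-\kappa_4 L^2|x-y|^2/s}\,ds,
\]
and the change of variable $s=L^2|x-y|^2 r$, combined with the decay of $|W|$, yields exactly the factor $L^K(L|x-y|)^{-S}$ (with $L^{2A_3}\sim L^K$ in the natural case $A_3=K/2$ forced by Cauchy--Schwarz applied to hypotheses (1)--(2)).

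The principal obstacle is constructing a usable integral representation of $\widetilde H$ through $e^{-us}$: since $\widetilde H$ is compactly supported, it is not a classical Laplace transform, so $W$ must either be a signed measure or distribution obtained through a Paley--Wiener contour argument on the entire analytic extension of the Laplace transform of $\widetilde H$, or the representation replaced by a dyadic decomposition $H=\sum_j H_j$ in which each piece is coupled to a heat semigroup at a matched time scale $t_j\sim(2^{-j}L)^{-2}$ and the pieces summed geometrically. Either route relies on integrating by parts $S$ times to convert derivatives of $H$ into polynomial decay of $W$, and the hypothesis $S>\max\{K,d\}$ enters exactly at this step: $S>d$ ensures absolute convergence of the interchanges of summation and integration (via hypothesis (1)), while $S>K$ ensures that the generated spatial decay outweighs the $L^K$ growth of the kernel on the diagonal. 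Once the representation step is settled, the remaining estimates are routine Gaussian integral calculations.
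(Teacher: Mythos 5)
Your argument in the near-diagonal regime $L|x-y|\le 1$ is exactly the paper's: use $H$ supported in $[-1,1]$, take $t=L^{-2}$ in hypotheses (1)--(2), and apply Cauchy--Schwarz to get $|K_L|\le e\|H\|_\infty\sqrt{\kappa_1\kappa_2}\,L^K$. That part is correct and complete.

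The off-diagonal regime is where the proposal breaks down, and the gap is not a technicality. You correctly observe that the representation $H(\sqrt{u})=\int_0^\infty W(s)e^{-us}\,ds$ cannot hold with any classical $W$, but you then leave the fix unresolved, gesturing at ``a Paley--Wiener contour argument'' or ``a dyadic decomposition coupled to a heat semigroup at matched time scales'' without constructing either. Neither as sketched can work. A Laplace transform of an $L^1$ (or even tempered distributional) density is analytic in $\operatorname{Re}u>0$, hence cannot vanish on a real interval unless it is identically zero; promoting $W$ to a signed measure does not change this, and promoting it to a more singular distribution destroys the absolute integrability you would need to interchange sum and integral and to run the change of variables you describe. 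In the dyadic version, a frequency-localized piece $H_j(\lambda/L)$ has support $\lambda\sim 2^{-j}L$, but there is no mechanism by which comparing it to $e^{-\lambda^2 t_j}$ produces the Gaussian off-diagonal decay in $x,y$: the heat semigroup by itself has infinite propagation speed, and the hypotheses (1)--(3) are all about the heat kernel, so composing with another heat factor merely reproduces the same diagonal bounds. The missing ingredient --- and the one the paper's argument is built around --- is the passage from the heat semigroup to the even wave group via $\exp(-\lambda^2 t)=\frac{1}{\sqrt{\pi t}}\int_0^\infty e^{-u^2/(4t)}\cos(\lambda u)\,du$. Theorem~\ref{T41_FM} (Sikora's lemma) shows that the Gaussian decay $e^{-r^2/t}$ in hypothesis (3) is equivalent to the vanishing of the cosine transform $W(u)=\sum_k\cos(\ell_k u)a_k$ for $u\le 2r$; Corollary~\ref{cor_10} then annihilates any spectral multiplier $G$ whose Fourier transform is supported in $[-2r,2r]$. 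The paper splits $H=H_Y+(H-H_Y)$, applies Lemma~\ref{nice_kernel} to bound the remainder by $c_{10}L^K Y^{-S}$, and then shows the $H_Y$ contribution (tested against smooth bumps $g_1,g_2$ near $x$ and $y$) is \emph{exactly zero} because $\widehat{H_Y}$ is supported in $[-Y,Y]$ with $Y=\sqrt{\kappa_4}L|x-y|$ while the distance between the supports of $g_1,g_2$ exceeds $|x-y|/2$. None of this finite-speed-of-propagation structure appears in your outline, and without it the Gaussian bound (3) cannot be converted into polynomial decay in $L|x-y|$ at the sharp rate $S$. So what you have is a correct half-proof plus a plan for the other half that identifies the obstacle but does not overcome it.
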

The constant $C_9$ does not depend on the specific Bessel systems $\{\phi_k\}$ and $\{\psi_k\}$ nor on the manifold $\mathcal M$, but only on the constants $\kappa_1,\,\kappa_2,\,\kappa_3,\,\kappa_4,\,A_1,\,A_2,\,A_3$, and on the function $H$.

In particular, assume that the generalized Bessel systems are given by 
\[
\phi_k=X_1\ldots X_\ell\varphi_k,\quad\psi_k=Y_1\ldots Y_m\varphi_k
\]
where  $X_1,\ldots,\,X_\ell$ and $Y_1,\ldots,\,Y_m$ are differentiable vector fields on $\mathcal M$ such that $\|X_j\|_x\le 1$ and $\|Y_i\|_x\le 1$ for all $x\in\mathcal M$, for all $j=1,\ldots,\ell$ and
for all $i=1,\ldots,\,m$. Then, by Theorem \ref{greiner}, the above Theorem \ref{kernel} applies with $\kappa_1=c_7(\ell,\ell)$, $\kappa_2=c_7(m,m)$, $\kappa_3=c_7(\ell,m)$, $\kappa_4=c_8(\ell, m)$, $A_1=d+2\ell$, $A_2=d+2m$, and $A_3=(d+\ell+m)/2$.

Theorem \ref{kernel} follows
from a series of results, the first being Theorem 4.1 again in \cite{FM2010}.
The latter is a nice elementary result on holomorphic functions, which is a simplified
version of a result of A. Sikora \cite[Theorem 2]{Sikora}. Unfortunately, the
proof of Theorem 4.1 presented in \cite{FM2010} is incomplete, as inequality (4.12)
is not properly justified. Actually one needs to use, as A.
Sikora does in his original proof, some version of the Phragm\'en-Lindel\"of
theorem. We give here a complete proof of this result.

\begin{theorem}[Theorem 4.1 in \cite{FM2010}]\label{T41_FM}
Let $r>0$, $\{a_{k}\}$ be an absolutely summable sequence of complex numbers,
$\{\ell_{k}\}$ be a sequence of nonnegative, nondecreasing numbers with
$\ell_{k}\rightarrow\infty$, and
\[
K\left(  t\right)  =\sum_{k=0}^{\infty}\exp\left(  -\ell_{k}^{2}t\right)
a_{k},\quad W\left(  t\right)  =\sum_{k=0}^{\infty}\cos\left(  \ell
_{k}t\right)  a_{k}.
\]
Then%
\[
\left\vert K\left(  t\right)  \right\vert \leq \alpha t^{-\beta}\exp\left(
-r^{2}/t\right)  \sum_{k=0}^{\infty}\left\vert a_{k}\right\vert ,\quad
t\in\left(  0,1\right]
\]
if and only if $W\left(  t\right)  =0$ for $0\leq t\leq2r$.
\end{theorem}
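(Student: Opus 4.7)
The two implications require very different techniques. The $(\Leftarrow)$ direction (from vanishing of $W$ to Gaussian decay of $K$) is an elementary Gaussian computation, while the $(\Rightarrow)$ direction is the delicate one, and is precisely where the gap in the original argument of \cite{FM2010} lies; its proof relies on a Phragm\'en--Lindel\"of argument.

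For the $(\Leftarrow)$ direction, the starting point is the elementary Gaussian Fourier transform identity $\int_{0}^{+\infty}e^{-v^{2}/(4t)}\cos(\ell v)\,dv=\sqrt{\pi t}\,e^{-\ell^{2}t}$, which upon multiplication by $a_{k}$ and summation (justified by $\sum|a_{k}|<+\infty$) yields
\[
K(t)=\frac{1}{\sqrt{\pi t}}\int_{0}^{+\infty}e^{-v^{2}/(4t)}W(v)\,dv.
\]
If $W$ vanishes on $[0,2r]$, the integration is confined to $v\geq 2r$, where $|W(v)|\leq\sum|a_{k}|$; combined with the Gaussian tail estimate $\int_{2r}^{+\infty}e^{-v^{2}/(4t)}\,dv\leq (t/r)\,e^{-r^{2}/t}$ this immediately yields $|K(t)|\leq\frac{\sqrt{t}}{r\sqrt{\pi}}\sum|a_{k}|\,e^{-r^{2}/t}$, stronger than the claimed bound.

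For the $(\Rightarrow)$ direction, setting $w=v^{2}$ and $t=1/(4s)$ rewrites the above identity as
\[
H(s):=\sqrt{\pi/s}\,K(1/(4s))=\int_{0}^{+\infty}e^{-ws}\frac{W(\sqrt{w})}{\sqrt{w}}\,dw,\qquad s>0,
\]
exhibiting $H$ as the Laplace transform of $h(w):=W(\sqrt{w})/\sqrt{w}$. The function $H$ is holomorphic on $\{\mathrm{Re}(s)>0\}$ (principal branch of $\sqrt{s}$), the hypothesis on $K$ translates into the real-axis bound $|H(s)|\leq Cs^{\beta-1/2}\sum|a_{k}|\,e^{-4r^{2}s}$ for $s\geq 1/4$, and the crude bound $|K(z)|\leq\sum|a_{k}|$ on $\mathrm{Re}(z)\geq 0$ gives $|H(s)|\leq\sqrt{\pi/|s|}\sum|a_{k}|$ throughout the closed right half-plane.

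The crux of the proof is to upgrade the real-axis decay to the full right half-plane. The auxiliary function $\Psi(s)=s^{1/2}e^{4r^{2}s}H(s)$ is holomorphic on the first quadrant, polynomially bounded on both boundary rays (on the real ray by the hypothesis; on the imaginary ray because $|e^{4r^{2}i\tau}|=1$ and $s^{1/2}H(s)$ is bounded there), and satisfies $|\Psi(s)|\lesssim e^{4r^{2}|s|}$ in the interior. Since a sector of opening $\pi/2$ tolerates growth of order $\exp(|s|^{\rho})$ for any $\rho<2$, Phragm\'en--Lindel\"of forces $\Psi$ to be polynomially bounded in the entire quadrant; a symmetric argument in the fourth quadrant gives
\[
|H(s)|\leq C(1+|s|)^{N}|s|^{-1/2}e^{-4r^{2}\mathrm{Re}(s)}\qquad\text{throughout }\mathrm{Re}(s)>0.
\]
A Paley--Wiener argument then closes the proof: the shift $g(w)=h(w+4r^{2})$ has Laplace transform $e^{4r^{2}s}H(s)$, polynomially bounded and holomorphic on the right half-plane, forcing $g$ to be supported in $[0,+\infty)$ as a tempered distribution. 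Hence $h(w)=0$ for $w\in(0,4r^{2})$, i.e.\ $W(u)=0$ on $(0,2r)$, and the endpoints follow from uniform continuity of $W$. The main obstacle is the Phragm\'en--Lindel\"of step itself: the interplay between the exponential factor $e^{4r^{2}s}$ (whose type is exactly at the borderline permitted by the quadrant geometry) and the $|s|^{-1/2}$ singularity of $H$ at the origin is the delicate point that was insufficiently justified in \cite{FM2010}.
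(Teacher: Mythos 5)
Your proof follows the same skeleton as the paper's: the Gaussian identity expressing $K$ as a heat transform of $W$, and a Phragm\'en--Lindel\"of argument on the two quadrants applied to (essentially) $e^{cs}K(1/(4s))$. The differences are two, both real and both legitimate. First, the paper absorbs the polynomial factor $t^{-\beta}$ into the exponential by sacrificing an $\varepsilon$ (replacing $r$ by $r-\varepsilon$ and later letting $\varepsilon\to 0$), which lets it use the textbook Phragm\'en--Lindel\"of with a \emph{constant} boundary bound; you instead keep the polynomial and use the standard variant of Phragm\'en--Lindel\"of that promotes a polynomial boundary bound to a polynomial interior bound (apply the basic version to $\Psi(s)/(1+s)^{N}$). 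Second, the paper extracts the support conclusion ``by hand'' via a roughly page-long test-function argument (pairing $g$ against a smooth $\phi$ supported in $[0,b]$, estimating $\widehat\phi$, and showing the pairing tends to $0$ as $\tau\to\infty$ when $b\le 4(r-\varepsilon)^{2}$); you instead invoke a half-line Paley--Wiener(--Schwartz) theorem as a black box. Your route is shorter and avoids the $\varepsilon$-bookkeeping; the paper's is self-contained and does not presuppose the distributional Paley--Wiener machinery. Two small points you should tighten: (i) as written, $\Psi(s)=\sqrt{\pi}\,e^{4r^{2}s}K(1/(4s))$ is \emph{not} continuous at $s=0$ (along the imaginary axis $K(1/(4s))$ merely oscillates), so a regularizing factor such as $s/(1+s)$ -- exactly what the paper builds into $F$ -- is needed before invoking Phragm\'en--Lindel\"of on the closed quadrant; and (ii) in the Paley--Wiener step one should record that the $|s|^{-1/2}$ singularity of $e^{4r^{2}s}H(s)$ at the origin is controlled by $|s|^{-1/2}\le(\operatorname{Re}s)^{-1/2}$ and that the boundary value on $i\mathbb R$ is locally integrable, so the half-plane bound has the shape required by the theorem you are citing. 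Neither point is a conceptual obstacle; with those fixes your argument is a correct alternative proof.
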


\begin{proof}
Assume without loss of generality that $\sum_{k=0}^{\infty}\left\vert
a_{k}\right\vert =1.$ By the well known formula on the Fourier transform of the Gaussian function, for all $t>0$ and for all
$k=0,1,2,\ldots$%
\[
\exp\left(  -\ell_{k}^{2}t\right)  =\frac{1}{\sqrt{\pi t}}\int_{0}^{+\infty
}\exp\left(  -u^{2}/\left(  4t\right)  \right)  \cos\left(  \ell_{k}u\right)
du
\]
and, by the absolute and uniform convergence of the series defining $W\left(
t\right)  $,%
\begin{equation}
K\left(  t\right)  =\frac{1}{\sqrt{\pi t}}\int_{0}^{+\infty}\exp\left(
-u^{2}/\left(  4t\right)  \right)  W\left(  u\right)  du \label{key}%
\end{equation}
Assume now $W\left(  u\right)  =0$ for all $0\leq u\leq2r$. If $t\geq r^{2}$
then%
\[
\left\vert K\left(  t\right)  \right\vert \leq1\leq\exp\left(  1-r^{2}%
/t\right)  =e\exp\left(  -r^{2}/t\right)  .
\]
If $t<r^{2}$ then, by (\ref{key}),%
\begin{align*}
\left\vert K\left(  t\right)  \right\vert  &  \leq\frac{1}{\sqrt{\pi t}}%
\int_{2r}^{+\infty}\exp\left(  -u^{2}/\left(  4t\right)  \right)  \left\vert
W\left(  u\right)  \right\vert du\\
&  \leq\frac{1}{\sqrt{\pi t}}\int_{2r}^{+\infty}\exp\left(  -u^{2}/\left(
4t\right)  \right)  du\\
&  =\frac{1}{\sqrt{\pi}}\int_{r^{2}/t}^{+\infty}u^{-1/2}\exp\left(  -u\right)
du\\
&  \leq\frac{1}{\sqrt{\pi}}\left(  \frac{t}{r^{2}}\right)  ^{1/2}\exp\left(
-\frac{r^{2}}{t}\right)  \leq\frac{1}{\sqrt{\pi}}\exp\left(  -\frac{r^{2}}%
{t}\right)  ,
\end{align*}
and the thesis follows with $\alpha=e$ and $\beta=0$.

Assume now that
\[
\left\vert K\left(  t\right)  \right\vert \leq \alpha t^{-\beta}\exp\left(
-r^{2}/t\right)  ,\quad t\in\left(  0,1\right]  .
\]
Then for any $\varepsilon>0$ there exists a positive constant $\gamma$ such
that
\begin{equation}
\left\vert K\left(  t\right)  \right\vert \leq \gamma\exp\left(  -\left(
r-\varepsilon\right)  ^{2}/t\right)  ,\quad t\in\left(  0,1\right]  .
\label{hyp}%
\end{equation}
For any complex $t=\tau+i\xi$ with $\tau\geq0,$ define the function
\[
F\left(  t\right)  =\left\{
\begin{array}
[c]{ll}%
\dfrac{t}{1+t}\exp\left(  4\left(  r-\varepsilon\right)  ^{2}t\right)
{\displaystyle\sum\limits_{k=0}^{+\infty}}
\exp\left(  -\ell_{k}^{2}/\left(  4t\right)  \right)  a_{k} & t\neq
0,\,\tau\geq0\\
0 & t=0.
\end{array}
\right.
\]
It is easy to show that $F$ satisfies the hypotheses of the Phragm\'en-Lindel\"of
theorem (see \cite[Theorem 3.4 page 124]{Stein}) in $\left\{  -\frac{\pi}{2}<\arg\left(  t\right)  <0\right\}  $ and in
$\left\{  0<\arg\left(  t\right)  <\frac{\pi}{2}\right\}  .$ In particular, by
(\ref{hyp}), for all $\tau>0$%
\begin{align*}
\left\vert F\left(  \tau\right)  \right\vert  &  \leq\left\{
\begin{array}
[c]{ll}%
\dfrac{\tau}{1+\tau}\exp\left(  4\left(  r-\varepsilon\right)  ^{2}%
\tau\right)  \gamma\exp\left(  -\left(  r-\varepsilon\right)  ^{2}4\tau\right)
& \text{if }1/\left(  4\tau\right)  \leq1\\
\\
\dfrac{\tau}{1+\tau}\exp\left(  4\left(  r-\varepsilon\right)  ^{2}\tau\right)
& \text{if }1/\left(  4\tau\right)  \geq1
\end{array}
\right. \\
&  \leq\left\{
\begin{array}
[c]{ll}%
\gamma & \text{if }\tau\geq1/4\\
\\
\dfrac{1}{5}\exp\left(  \left(  r-\varepsilon\right)  ^{2}\right)  & \text{if
}\tau\leq1/4
\end{array}
\right. \\
&  \leq\max\left\{  \gamma,\dfrac{1}{5}\exp\left(  r^{2}\right),1 \right\}
=:\eta.
\end{align*}
Also, for all $\xi\in\mathbb{R\setminus}\left\{  0\right\}  $,%
\[
\left\vert F\left(  i\xi\right)  \right\vert \leq\dfrac{\left\vert
\xi\right\vert }{\left\vert 1+i\xi\right\vert }%
{\displaystyle\sum\limits_{k=0}^{+\infty}}
\left\vert a_{k}\right\vert \leq1\le\eta.
\]
The function $F$ is continuous on $\left\{  \operatorname{Re}\left(  t\right)
\geq0\right\}  .$ The continuity outside $0$ follows by the absolute and
uniform convergence of the series defining $F$ in any compact subset of
$\left\{  \operatorname{Re}\left(  t\right)  \geq0\right\}  \setminus\left\{
0\right\}  $. The continuity in $0$ follows by the estimate, for
$\operatorname{Re}\left(  t\right)  \geq0$ and $0<\left\vert t\right\vert
<\delta$,
\begin{align*}
\left\vert F\left(  t\right)  \right\vert  &  \leq\left\vert t\right\vert
\exp\left(  4\left(  r-\varepsilon\right)  ^{2}\delta\right)
{\displaystyle\sum\limits_{k=0}^{+\infty}}
\exp\left(  -\ell_{k}^{2}\tau/\left(  4\left\vert t\right\vert ^{2}\right)
\right)  \left\vert a_{k}\right\vert \\
&  \leq\left\vert t\right\vert \exp\left(  4\left(  r-\varepsilon\right)
^{2}\delta\right)  .
\end{align*}
Again by the absolute and uniform convergence of the series defining $F$, it
follows that $F$ is holomorphic in $\left\{  \operatorname{Re}\left(
t\right)  >0\right\}  .$ Finally, for all $\operatorname{Re}\left(  t\right)
>0$,
\begin{align*}
\left\vert F\left(  t\right)  \right\vert  &  \leq\exp\left(  4\left(
r-\varepsilon\right)  ^{2}\tau\right)
{\displaystyle\sum\limits_{k=0}^{+\infty}}
\exp\left(  -\ell_{k}^{2}\tau/\left(  4\left(  \tau^{2}+\xi^{2}\right)  \right)
\right)  \left\vert a_{k}\right\vert \\
&  \leq\exp\left(  4\left(  r-\varepsilon\right)  ^{2}\tau\right)  \leq
\exp\left(  4\left(  r-\varepsilon\right)  ^{2}\left\vert t\right\vert
\right)  .
\end{align*}
It therefore follows by the Phragm\'en-Lindel\"of theorem that
\[
\left\vert F\left(  t\right)  \right\vert \leq \eta,\quad\forall
t:\operatorname{Re}\left(  t\right)  \geq0.
\]
The proof now follows as in \cite{FM2010}. Changing variables in (\ref{key})
we obtain
\[
\frac{1}{\sqrt{4\tau}}K\left(  \frac{1}{4\tau}\right)  =\int_{-\infty
}^{+\infty}\exp\left(  -u\tau\right)  g\left(  u\right)  du
\]
where
\[
g\left(  u\right)  =\left\{
\begin{array}
[c]{ll}%
\dfrac{1}{\sqrt{4\pi u}}W\left(  \sqrt{u}\right)  & \text{if }u>0\\
0 & \text{if }u\leq0.
\end{array}
\right.
\]
By the definition of $F$ we have for $\tau>0$%
\[
\frac{1}{\sqrt{4\tau}}K\left(  \frac{1}{4\tau}\right)  =\frac{1}{\sqrt{4\tau}%
}\frac{1+\tau}{\tau}\exp\left(  -4\left(  r-\varepsilon\right)  ^{2}%
\tau\right)  F\left(  \tau\right)  ,
\]
which can be extended analytically to $\left\{  \operatorname{Re}\left(
t\right)  >0\right\}  $. Also $\int_{-\infty}^{+\infty}\exp\left(
-u\tau\right)  g\left(  u\right)  du$ can be extended analytically to
$\left\{  \operatorname{Re}\left(  t\right)  >0\right\}  $, and the identity%
\begin{equation}
\label{F(t)}
\frac{1}{\left(  4t\right)  ^{1/2}}\frac{1+t}{t}\exp\left(  -4\left(
r-\varepsilon\right)  ^{2}t\right)  F\left(  t\right)  =\int_{-\infty
}^{+\infty}\exp\left(  -ut\right)  g\left(  u\right)  du
\end{equation}
holds in $\left\{  \operatorname{Re}\left(  t\right)  >0\right\}  $. Let now
$\phi\in\mathcal{C}^{\infty}\left(  \mathbb{R}\right)  $ with support in
$\left[  0,b\right]  .$ Then%
\[
\widehat{\phi}\left(  t\right)  =\int_{-\infty}^{+\infty}\phi\left(  u\right)
e^{-iut}du
\]
is entire and a repeated integration by parts gives
\begin{align*}
\widehat{\phi}\left(  -\xi+i\tau\right)   &  =\int_{-\infty}^{+\infty}%
\phi\left(  u\right)  e^{-iu\left(  -\xi+i\tau\right)  }du\\
&  =\frac{\left(  -1\right)  ^{R}}{\left(  -i\left(  -\xi+i\tau\right)
\right)  ^{R}}\int_{-\infty}^{+\infty}\phi^{\left(  R\right)  }\left(
u\right)  e^{-iu\left(  -\xi+i\tau\right)  }du\\
&  =\frac{\left(  -1\right)  ^{R}}{\left(  \tau+i\xi\right)  ^{R}}%
\int_{-\infty}^{+\infty}\phi^{\left(  R\right)  }\left(  u\right)  e^{\left(
\tau+i\xi\right)  u}du,
\end{align*}
so that
\begin{equation}\label{phihat}
\left\vert \widehat{\phi}\left(  -\xi+i\tau\right)  \right\vert \leq\frac
{\max\left(  1,e^{\tau b}\right)  }{\left(  \tau^{2}+\xi^{2}\right)  ^{R/2}%
}\int_{-\infty}^{+\infty}\left\vert \phi^{\left(  R\right)  }\left(  u\right)
\right\vert du.
\end{equation}
Thus, for any $\tau>0$,
\begin{align*}
&  \int_{\mathbb{R}}g\left(  u\right)  \phi\left(  u\right)  du\\
&  =\int_{\mathbb{R}}e^{-u\tau}g\left(  u\right)  \left\{  e^{u\tau}%
\phi\left(  u\right)  \right\}  du\\
&  =\int_{\mathbb{R}}e^{-u\tau}g\left(  u\right)  \left\{  \frac{1}{2\pi}%
\int_{\mathbb{R}}e^{i\xi u}\widehat{\left(  e^{\cdot\tau}\phi\left(
\cdot\right)  \right)  }\left(  \xi\right)  d\xi\right\}  du\\
&  =\int_{\mathbb{R}}e^{-u\tau}g\left(  u\right)  \frac{1}{2\pi}%
\int_{\mathbb{R}}e^{i\xi u}\int_{\mathbb{R}}e^{s\tau}\phi\left(  s\right)
e^{-is\xi}dsd\xi du\\
&  =\int_{\mathbb{R}}e^{-u\tau}g\left(  u\right)  \frac{1}{2\pi}%
\int_{\mathbb{R}}e^{i\xi u}\int_{\mathbb{R}}\phi\left(  s\right)
e^{-is\left(  \xi+i\tau\right)  }dsd\xi du\\
&  =\int_{\mathbb{R}}e^{-u\tau}g\left(  u\right)  \frac{1}{2\pi}%
\int_{\mathbb{R}}e^{-i\xi u}\int_{\mathbb{R}}\phi\left(  s\right)
e^{-is\left(  -\xi+i\tau\right)  }dsd\xi du\\
&  =\int_{\mathbb{R}}e^{-u\tau}g\left(  u\right)  \frac{1}{2\pi}%
\int_{\mathbb{R}}e^{-i\xi u}\widehat{\phi}\left(  -\xi+i\tau\right)  d\xi du.
\end{align*}
One can apply Fubini's theorem, since%
\[
\left\vert e^{-u\tau}g\left(  u\right)  e^{-i\xi u}\widehat{\phi}\left(
-\xi+i\tau\right)  \right\vert \leq e^{-u\tau}\left\vert g\left(  u\right)
\right\vert \frac{e^{\tau b}}{\left(  \tau^{2}+\xi^{2}\right)  ^{R/2}}%
\int_{-\infty}^{+\infty}\left\vert \phi^{\left(  R\right)  }\left(  u\right)
\right\vert du,
\]
so that by (\ref{F(t)})
\begin{align*}
&\int_{\mathbb{R}}g\left(  u\right)  \phi\left(  u\right)  du  \\  &=\frac
{1}{2\pi}\int_{\mathbb{R}}\widehat{\phi}\left(  -\xi+i\tau\right)
\int_{\mathbb{R}}g\left(  u\right)  e^{-u\left(  \tau+i\xi\right)  }dud\xi\\
 &=\frac{1}{2\pi}\int_{\mathbb{R}}\widehat{\phi}\left(  -\xi+i\tau\right)
\frac{1}{\left(  4\left(  \tau+i\xi\right)  \right)  ^{1/2}}\frac{1+\tau+i\xi
}{\tau+i\xi}e^{  -4\left(  r-\varepsilon\right)  ^{2}\left(  \tau
+i\xi\right) }  F\left(  \tau+i\xi\right)  d\xi.
\end{align*}
It follows from this and \eqref{phihat} that
\begin{align*}
&  \left\vert \int_{\mathbb{R}}g\left(  u\right)  \phi\left(  u\right)
du\right\vert \\
&  \leq\frac{\eta}{4\pi}e^{ -4\left(  r-\varepsilon
\right)  ^{2}\tau}\int_{\mathbb{R}}\left\vert \widehat{\phi}\left(
-\xi+i\tau\right)  \right\vert \frac{\left\vert 1+\tau+i\xi\right\vert
}{\left\vert \tau+i\xi\right\vert ^{3/2}}  d\xi\\
&  \leq\frac{\eta}{4\pi}e^ { \left(  b-4\left(
r-\varepsilon\right)  ^{2}\right)  \tau}\int_{-\infty}^{+\infty}\left\vert \phi^{\left(
R\right)  }\left(  u\right)  \right\vert du\int_{\mathbb{R}}\left(  \left(
1+\tau\right)  ^{2}+\xi^{2}\right)  ^{1/2}\left(  \tau^{2}+\xi^{2}\right)
^{- R/2  - 3/4  } d\xi
\end{align*}
which goes to $0$ as $\tau\rightarrow+\infty$ if $b\leq4\left(  r-\varepsilon
\right)  ^{2}$. By the arbitrarity of $\phi$ it follows that $g\left(
u\right)  =0$ for $0\leq u\leq4\left(  r-\varepsilon\right)  ^{2}$, and by the
arbitrarity of $\varepsilon>0,$ $g\left(  u\right)  =0$ for $0\leq u\leq
4r^{2}$, so that $W\left(  t\right)  =0$ for $0\leq t\leq2r$.
\end{proof}

The next step in the proof of Theorem \ref{kernel}, again following \cite{FM2010},
is

\begin{corollary}[Corollary 4.1 in \cite{FM2010}]
\label{cor_10}
Let $G:\mathbb R\to \mathbb R$ be an even, bounded, integrable function such that 
the Fourier transform
$
\widehat G
$
is also integrable and supported on $[-2r,\,2r]$. 
In the assumptions of Theorem \ref{T41_FM}, if 
\[
\left\vert K\left(  t\right)  \right\vert \leq \alpha t^{-\beta}\exp\left(
-r^{2}/t\right)  \sum_{k=0}^{\infty}\left\vert a_{k}\right\vert ,\quad
t\in\left(  0,1\right]
\]
then 
\[
\sum_{k=0}^{+\infty} G(\ell_k)a_k=0.
\]
\end{corollary}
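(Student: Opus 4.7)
The plan is to express $G(\ell_k)$ via Fourier inversion, so as to convert the target sum into an integral against $W$, and then invoke Theorem \ref{T41_FM} to see that the integrand vanishes.

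First I would apply Theorem \ref{T41_FM} directly. The hypothesis on $K(t)$ is exactly the exponential decay estimate in the statement of that theorem, so we conclude that
\[
W(u)=\sum_{k=0}^{\infty}\cos(\ell_k u)\,a_k=0\quad\text{for all }0\le u\le 2r.
\]
Since $W$ is even in $u$, this in fact gives $W(u)=0$ on the whole interval $[-2r,2r]$.

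Next, since $G$ is even, real, bounded and integrable with $\widehat G$ also integrable, the Fourier inversion formula applies pointwise and gives
\[
G(x)=\frac{1}{2\pi}\int_{-2r}^{2r}\widehat G(u)\,e^{iux}\,du=\frac{1}{2\pi}\int_{-2r}^{2r}\widehat G(u)\cos(ux)\,du,
\]
where in the last step I used that $\widehat G$ is even and supported in $[-2r,2r]$. Substituting $x=\ell_k$, multiplying by $a_k$, and summing,
\[
\sum_{k=0}^{\infty}G(\ell_k)\,a_k=\sum_{k=0}^{\infty}a_k\cdot\frac{1}{2\pi}\int_{-2r}^{2r}\widehat G(u)\cos(\ell_k u)\,du.
\]
The dominating bound $|a_k\,\widehat G(u)\cos(\ell_k u)|\le|a_k|\,|\widehat G(u)|$ is summable-integrable on $\mathbb N\times[-2r,2r]$, since $\{a_k\}$ is absolutely summable and $\widehat G$ is integrable, so Fubini's theorem permits the interchange of sum and integral, yielding
\[
\sum_{k=0}^{\infty}G(\ell_k)\,a_k=\frac{1}{2\pi}\int_{-2r}^{2r}\widehat G(u)\,W(u)\,du.
\]
By the first step, $W\equiv 0$ on $[-2r,2r]$, so the right-hand side is zero.

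The proof is essentially routine once Theorem \ref{T41_FM} is in hand; the only point to be a little careful about is the absolute summability needed to apply Fubini, and the fact that $\widehat G$ (together with $G$) is even, which both allows the cosine form of Fourier inversion and ensures the integrand is supported on the interval where $W$ vanishes.
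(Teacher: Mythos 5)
Your proof is correct and follows essentially the same route as the paper: apply Theorem~\ref{T41_FM} to get $W\equiv 0$ on $[0,2r]$, write $G(\ell_k)$ via the cosine form of Fourier inversion, and use Fubini (justified by $\sum|a_k|<\infty$ and $\widehat G\in L^1$) to identify $\sum_k G(\ell_k)a_k$ with an integral of $\widehat G\cdot W$ over the support of $\widehat G$, which vanishes. The paper states this more tersely but the argument is identical; your explicit Fubini justification is a welcome addition.
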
 

\begin{proof}
By the Fourier inversion formula,
\[
G(u)=\frac 1\pi\int_0^{+\infty}\widehat G(t)\cos(tu)dt.
\]
Thus,
\[
\sum_{k=0}^{+\infty}G(\ell_k)a_k=\frac 1\pi\int_0^{+\infty}\widehat G(t)W(t)dt=0
\]
by Theorem \ref{T41_FM}.
\end{proof}

Let now $V:\mathbb R\to\mathbb R$ be an even function such that $\widehat V$ is infinitely differentiable with $\widehat V(u)=1 $ when $|u|\le1/2$ and $\widehat V(u)=0 $ when $|u|\ge1$. Then for any $Y>0$ let $H_Y$ be defined by
\[
\widehat H_Y(u)=\widehat H(u)\widehat V(u/Y).
\]

\begin{lemma}[Lemma 4.3 in \cite{FM2010}, Lemma 6.1 in \cite{MM}]
\label{nice_kernel}
In the assumptions of Theorem~\ref{kernel}, there is a constant $c_{10}>0$ (depending on $\kappa_1,\,\kappa_2,\,\kappa_3,\,\kappa_4,\,A_1,\,A_2,\,A_3$ and on the function $H$) such that for all $Y\ge1/2$ and for all $x,y\in\mathcal M$
\[
\left|
\sum_{k=0}^{+\infty}\left(H(\lambda_k/L)-H_Y(\lambda_k/L)\right)\phi_k(x){\psi_k(y)}
\right|
\le c_{10}L^KY^{-S}.
\]
\end{lemma}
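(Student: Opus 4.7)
The plan is to deduce the lemma from a sharp pointwise bound on $G:=H-H_Y$ combined with Cauchy--Schwarz and the heat-kernel hypotheses. The key observation is that $V_Y(\nu):=YV(Y\nu)$, where $V$ is the inverse Fourier transform of $\widehat V$, satisfies $\widehat{V_Y}(u)=\widehat V(u/Y)$, which is identically $1$ in a neighbourhood of the origin; hence every positive-order moment of $V_Y$ vanishes, $\int\nu^{k}V_Y(\nu)\,d\nu=0$ for all $k\ge 1$. Moreover, since $\widehat V\in C_c^{\infty}(\mathbb R)$, the function $V$ is Schwartz.

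Using $\int V_Y=1$, write
\[
G(\lambda)=\int\bigl[H(\lambda)-H(\lambda-\nu)\bigr]V_Y(\nu)\,d\nu,
\]
and Taylor-expand $H(\lambda-\nu)$ about $\lambda$ to order $S-1$ with $C^{S}$ integral remainder. The vanishing moments annihilate every polynomial term, leaving only the remainder, bounded pointwise by $C|\nu|^{S}\|H^{(S)}\|_\infty$. Since $\int|\nu|^{S}|V_Y(\nu)|\,d\nu=Y^{-S}\int|u|^{S}|V(u)|\,du<+\infty$, this gives the uniform bound $|G(\lambda)|\le C_1 Y^{-S}$. For $|\lambda|\ge 2$ the support of $H$ forces $H(\lambda)=0$, so $G(\lambda)=-(H*V_Y)(\lambda)$ and the convolution is supported on $|\nu|\ge|\lambda|-1\ge|\lambda|/2$; substituting $u=Y\nu$ and applying the Schwartz tail estimate $\int_{|u|\ge R}|V(u)|\,du\le C_M R^{-M}$ yields, for every $M\ge 0$,
\[
|G(\lambda)|\le C_M\,Y^{-S-M}|\lambda|^{-S-M},\qquad|\lambda|\ge 2.
\]

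To finish, split the sum at $\lambda_k=2L$. For the low frequencies $\lambda_k\le 2L$, the uniform bound and Cauchy--Schwarz give
\[
\Bigl|\sum_{\lambda_k\le 2L}G(\lambda_k/L)\phi_k(x)\psi_k(y)\Bigr|\le C_1 Y^{-S}\Bigl(\sum_{\lambda_k\le 2L}|\phi_k(x)|^{2}\Bigr)^{1/2}\Bigl(\sum_{\lambda_k\le 2L}|\psi_k(y)|^{2}\Bigr)^{1/2},
\]
and the heat-kernel hypothesis at $t=1/(4L^{2})$ forces $\sum_{\lambda_k\le 2L}|\phi_k(x)|^{2}\le e\kappa_1(2L)^{A_1}$, similarly for $\psi$, so this piece is bounded by $C L^{K}Y^{-S}$. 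For the tail $\lambda_k>2L$, choose an integer $M$ with $S+M>\max\{A_1,A_2\}$; a dyadic decomposition into shells $\lambda_k\in(2^{j}L,2^{j+1}L]$, $j\ge 1$, together with the heat-kernel hypothesis at $t=1/(2^{j+1}L)^{2}$ on each shell, produces the convergent geometric series
\[
\sum_{\lambda_k>2L}\lambda_k^{-(S+M)}|\phi_k(x)|^{2}\le C L^{A_1-S-M},
\]
and similarly for $\psi$. A final Cauchy--Schwarz combined with the sharp decay bound for $G$ bounds the tail by $C L^{K}Y^{-S-M}\le C L^{K}Y^{-S}$, using $Y\ge 1/2$. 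Adding the two contributions gives the claim. The main subtle point is the moment-cancellation step producing the uniform estimate $|G|\le C Y^{-S}$; once this is in place, the rest is routine bookkeeping with Cauchy--Schwarz and dyadic summation, the freedom to enlarge $M$ being supplied by the Schwartz class of $V$.
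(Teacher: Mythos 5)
The paper deliberately omits the proof of this lemma, deferring to Lemma~4.3 of \cite{FM2010} and Lemma~6.1 of \cite{MM}, so there is no internal argument to compare against line by line. Your reconstruction is correct and follows the standard route used in that literature. The key points are all in place: with the paper's Fourier convention $\widehat{\phi}(t)=\int\phi(u)e^{-iut}\,du$ one has $H_Y=H*V_Y$ where $V_Y(\nu)=YV(Y\nu)$, and since $\widehat V\in C_c^\infty$ the function $V$ is Schwartz and $\int V_Y=1$; because $\widehat{V_Y}\equiv1$ on $[-Y/2,Y/2]$, all moments $\int\nu^k V_Y(\nu)\,d\nu$ vanish for $k\ge1$, so the Taylor expansion of $H(\lambda-\nu)$ to order $S-1$ kills every polynomial term and leaves only the $C^S$ remainder, giving the uniform bound $|H(\lambda)-H_Y(\lambda)|\le C\,Y^{-S}$. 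The support property of $H$ combined with the Schwartz decay of $V$ then gives the extra decay $|G(\lambda)|\lesssim Y^{-S-M}|\lambda|^{-S-M}$ for $|\lambda|\ge2$, and the final split at $\lambda_k=2L$ together with Cauchy--Schwarz and the heat-kernel bound $\sum_{\lambda_k\le R}|\phi_k(x)|^2\le e\kappa_1 R^{A_1}$ closes the argument. Two cosmetic remarks. First, in the tail you could apply Cauchy--Schwarz shell by shell, in which case $M=0$ already works because $S>K$ guarantees convergence of $\sum_j 2^{j(K-S)}$; choosing $M$ with $S+M>\max\{A_1,A_2\}$ is also fine but a bit more than necessary. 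Second, as is also implicit in the references, the estimate $\sum_{\lambda_k\le R}|\phi_k(x)|^2\lesssim R^{A_1}$ obtained from the hypothesis at $t=R^{-2}$ requires $R\ge1$; this is harmless since in the application within the proof of Theorem~\ref{kernel} one has $L|x-y|\ge1$ and hence $L\gtrsim1$, but it is worth keeping in mind when reading the lemma as a standalone statement.
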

The interested reader can find the proof in the above mentioned references.

\begin{proof}[Proof of Theorem \ref{kernel}]
Again, we follow Filbir and Mhaskar \cite[Proof of Theorem~2.1 page 646]{FM2010}.
By the hypotheses, for all $L\ge 1$ we have
\[
\sum_{\lambda_k\le L}|\phi_k(x)|^2\le\sum_{k=0}^{+\infty}\exp(1-\lambda_k^2/L^2)|\phi_k(x)|^2 \le e\kappa_1 L^{A_1},
\]
and similarly,
\[
\sum_{\lambda_k\le L}|\psi_k(x)|^2\le e\kappa_2 L^{A_2}.
\]
Hence, by the Cauchy-Schwarz inequality
\begin{align}
\nonumber\left|
\sum_{k=0}^{+\infty}H(\lambda_k/L)\phi_k(x){\psi_k(y)}
\right|&\le \max_{t\in\mathbb R}|H(t)|\left(\sum_{\lambda_k\le L}|\phi_k(x)|^2\right)^{1/2}
\left(\sum_{\lambda_k\le L}|\psi_k(y)|^2\right)^{1/2}\\
&\le e(\kappa_1\kappa_2)^{1/2}\max_{t\in\mathbb R}|H(t)| L^K \label {f1}.
\end{align}
This proves the theorem when $L|x-y|\le1$. Assume now $L|x-y|\ge1$ and let $Y:=\sqrt{\kappa_4}L|x-y|$.
Let $f_1,\,f_2\in L^1(\mathcal M)$ with $\|f_1\|_1=\|f_2\|_1=1$ be supported in the balls centered
at $x$ and $y$ respectively, and with radii $|x-y|/8$. For any $\varepsilon >0$ there exist two functions 
$g_1,\,g_2\in\mathcal C^{\infty}(\mathcal M)$ supported in the balls centered
at $x$ and $y$ respectively, and with radii $|x-y|/4$ such that 
$\|f_1-g_1\|_1<\varepsilon$ and $\|f_2-g_2\|_1<\varepsilon$.
Therefore, by (\ref{f1}) and Lemma \ref{nice_kernel},
\begin{align*}
&
\left|\sum_{k=0}^{+\infty}H(\lambda_k/L)\int_{\mathcal M}\int_{\mathcal M}\phi_k(w){\psi_k(z)}
{f_1(w)} {f_2(z)}d\mu(w)d\mu(z)
\right|\\
&\le 
\left|\sum_{k=0}^{+\infty}H(\lambda_k/L)\int_{\mathcal M}\int_{\mathcal M}\phi_k(w){\psi_k(z)}
{f_1(w)} ({f_2(z)}-g_2(z))d\mu(w)d\mu(z)\right|\\
&+\left|\sum_{k=0}^{+\infty}H(\lambda_k/L)\int_{\mathcal M}\int_{\mathcal M}\phi_k(w){\psi_k(z)}
({f_1(w)-g_1(w)}){g_2(z)}d\mu(w)d\mu(z)\right|\\
&+\left|\sum_{k=0}^{+\infty}H(\lambda_k/L)\int_{\mathcal M}\int_{\mathcal M}\phi_k(w){\psi_k(z)}
{g_1(w)}{g_2(z)}d\mu(w)d\mu(z)\right|\\
&\le\left|\sum_{k=0}^{+\infty}H(\lambda_k/L)\int_{\mathcal M}\int_{\mathcal M}\phi_k(w){\psi_k(z)}
{g_1(w)}{g_2(z)}d\mu(w)d\mu(z)\right|
\\
&+e(\kappa_1\kappa_2)^{1/2}\varepsilon  \max_{t\in\mathbb R}|H(t)|(\|f_1\|_1+\|g_2\|_1)L^K\\
&\le \left|\sum_{k=0}^{+\infty}H_Y(\lambda_k/L)\int_{\mathcal M}\int_{\mathcal M}\phi_k(w){\psi_k(z)}
{g_1(w)} {g_2(z)}d\mu(w)d\mu(z)
\right|\\
&+c_{10}L^K Y^{-S}\|g_1\|_1\|g_2\|_1+e(\kappa_1\kappa_2)^{1/2}\varepsilon  \max_{t\in\mathbb R}|H(t)|(\|f_1\|_1+\|g_2\|_1)L^K.
\end{align*}
The distance between the supports of $g_1$ and $g_2$ exceeds $|x-y|/2$ and therefore
for all $t\in(0,\,1]$
\begin{align}
\nonumber
&\left| \sum_{k=0}^{+\infty}\exp(-\lambda_k^2t)\int_{\mathcal M}\int_{\mathcal M}\phi_k(w){\psi_k(z)}
{g_1(w)}{g_2(z)}d\mu(w)d\mu(z)\right|\\
\label{ipotesi}
&\le\kappa_3t^{-A_3}\exp(-\kappa_4|x-y|^2/(4t))\|g_1\|_1\|g_2\|_1.
\end{align}
Observe that since $\{\phi_k\}$ and $\{\psi_k\}$ are generalized Bessel systems, 
then 
\[
\sum_{k=0}^{+\infty}|\int_{\mathcal M}\int_{\mathcal M}\phi_k(w){\psi_k(z)}
{g_1(w)}{g_2(z)}d\mu(w)d\mu(z)|
\le\left(\mathcal N_{\{\phi_k\}}(g_1){\mathcal N}_{\{\psi_k\}}(g_2)\right)^{1/2}.
\]
We may therefore apply Corollary \ref{cor_10} with $r=\sqrt{\kappa_4}|x-y|/2$, $G(u)=H_Y(u/L)$ and
\[
a_k=\int_{\mathcal M}\int_{\mathcal M}\phi_k(w){\psi_k(z)}
{g_1(w)}{g_2(z)}d\mu(w)d\mu(z).
\]
By (\ref{ipotesi}),
\[
\sum_{k=0}^{+\infty}H_Y(\lambda_k/L)\int_{\mathcal M}\int_{\mathcal M}\phi_k(w){\psi_k(z)}
{g_1(w)}{g_2(z)}d\mu(w)d\mu(z)=0.
\]
Therefore
\begin{align*}
&\left|\sum_{k=0}^{+\infty}H(\lambda_k/L)\int_{\mathcal M}\int_{\mathcal M}\phi_k(w){\psi_k(z)}
{f_1(w)} {f_2(z)}d\mu(w)d\mu(z)
\right|\\
&\le
c_{10}L^K Y^{-S}\|g_1\|_1\|g_2\|_1+e(\kappa_1\kappa_2)^{1/2}\varepsilon  \max_{t\in\mathbb R}|H(t)|(\|f_1\|_1+\|g_2\|_1)L^K\\
&\le c_{10}(1+\varepsilon)^2L^K Y^{-S}+e(\kappa_1\kappa_2)^{1/2}\varepsilon (2+\varepsilon) \max_{t\in\mathbb R}|H(t)|L^K.
\end{align*}
By the arbitrarity of $\varepsilon >0$ this gives
\[
\left|\sum_{k=0}^{+\infty}H(\lambda_k/L)\int_{\mathcal M}\int_{\mathcal M}\phi_k(w){\psi_k(z)}
{f_1(w)} {f_2(z)}d\mu(w)d\mu(z)
\right|
\le c_{10}L^K Y^{-S},
\]
and by the arbitrarity of $f_1$ and $f_2$ and the continuity of $\phi_k$ and $\psi_k$,
\[
\left|\sum_{k=0}^{+\infty}H(\lambda_k/L)\phi_k(x){\psi_k(y)}
\right|
\le C_{9}L^K Y^{-S}.
\]
\end{proof}

\section{Proof of Theorem \ref{MZ gradients}}

This proof follows the lines of the corresponding proof of Theorem \ref{MZ polys}
by Filbir and Mhaskar as found in \cite{FM2010, FM2011}, properly modified
to treat the case of the gradients.
 
Fix $\varepsilon>0,$ and let $v_{\varepsilon}:\mathbb{[}0,\mathbb{+\infty
)}\rightarrow\mathbb{R}$ be a $\mathcal C^{\infty}$ function such that $v_{\varepsilon
}\left(  u\right)  =u$ for $u\geq\varepsilon$, $v_{\varepsilon}\left(
u\right)  =\varepsilon/2$ for $u<\varepsilon/4$ and $v_{\varepsilon}\left(
u\right)  \geq u$ for all $u\geq0$. Let $P\in\Pi_{L}^{0}$ and define the
differentiable vector field
\[
T\left(  x\right)  :=\frac{\nabla P\left(  x\right)  }{v_{\varepsilon}\left(
\left\Vert \nabla P\left(  x\right)  \right\Vert \right)  }.
\]
Then
\[
TP\left(  x\right)  =\left\langle \nabla P\left(  x\right)  ,\frac{\nabla
P\left(  x\right)  }{v_{\varepsilon}\left(  \left\Vert \nabla P\left(
x\right)  \right\Vert \right)  }\right\rangle =\frac{\left\Vert \nabla
P\left(  x\right)  \right\Vert ^{2}}{v_{\varepsilon}\left(  \left\Vert \nabla
P\left(  x\right)  \right\Vert \right)  }\leq\left\Vert \nabla P\left(
x\right)  \right\Vert
\]
and%
\begin{align*}
&  \left\vert \int_{\mathcal{M}}\left\Vert \nabla P\left(  x\right)
\right\Vert d\mu\left(  x\right)  -\sum_{j=1}^{N}\frac{1}{N}\left\Vert \nabla
P\left(  x_{j}\right)  \right\Vert \right\vert \\
&  \leq\left\vert \int_{\mathcal{M}}\left(  \left\Vert \nabla P\left(
x\right)  \right\Vert -TP\left(  x\right)  \right)  d\mu\left(  x\right)
\right\vert +\left\vert \int_{\mathcal{M}}TP\left(  x\right)  d\mu\left(
x\right)  -\sum_{j=1}^{N}\frac{1}{N}TP\left(  x_{j}\right)  \right\vert \\
&  +\left\vert \sum_{j=1}^{N}\frac{1}{N}\left(  TP\left(  x_{j}\right)
-\left\Vert \nabla P\left(  x_{j}\right)  \right\Vert \right)  \right\vert \\
&  \leq2\varepsilon+\left\vert \int_{\mathcal{M}}TP\left(  x\right)
d\mu\left(  x\right)  -\sum_{j=1}^{N}\frac{1}{N}TP\left(  x_{j}\right)
\right\vert .
\end{align*}
Let us now call $\delta$ the maximum diameter of the balls $X_{j}$, so that  $\delta\le2c_2N^{-1/d}.$ Now%
\begin{align*}
\left\vert \int_{\mathcal{M}}TP\left(  x\right)  d\mu\left(  x\right)
-\sum_{j=1}^{N}\frac{1}{N}TP\left(  x_{j}\right)  \right\vert  &  \leq
\sum_{j=1}^{N}\int_{R_{j}}\left\vert TP\left(  x\right)  -TP\left(
x_{j}\right)  \right\vert d\mu\left(  x\right)  \\
&  \leq\sum_{j=1}^{N}\frac{1}{N}\sup_{x,z\in R_{j}}\left\vert TP\left(
x\right)  -TP\left(  z\right)  \right\vert 
\end{align*}
By (\ref{geodesic}), the last term can be bounded above by 
\[
\sum_{j=1}^{N}\frac{1}{N}\sup_{x,z\in R_j}\sup_{t\in [0,|x-y|]}\left\Vert \nabla
TP\left(  \alpha(t)\right)  \right\Vert |x-z|.
\]
where $\alpha$ is a normalized geodesic joining $x$ and $z$. 
Since $R_j$ is contained in the ball $X_j$, the geodesic $\alpha$ is contained in the
ball $2X_j$ with the same center as $X_j$ and radius twice the radius of $X_j$.
It follows that the last term is bounded above by 
\[
\delta\sum_{j=1}^{N}\frac{1}{N}\sup_{x\in 2X_{j}}\left\Vert \nabla
TP\left(  x\right)  \right\Vert .
\]

Defining now the vector field%
\[
S\left(  x\right)  :=\frac{\nabla TP\left(  x\right)  }{v_{\varepsilon}\left(
\left\Vert \nabla TP\left(  x\right)  \right\Vert \right)  }%
\]
we have as before%
\[
STP\left(  x\right)=\frac{\|\nabla TP(x)\|^2}{v_{\varepsilon}(\|\nabla TP(x)\|)} \leq\left\Vert \nabla TP\left(  x\right)  \right\Vert
\]
and therefore%
\begin{align*}
&\delta\sum_{j=1}^{N}\frac{1}{N}\sup_{x\in 2X_{j}}\left\Vert \nabla TP\left(
x\right)  \right\Vert \\
 &  \leq\delta\sum_{j=1}^{N}\frac{1}{N}\sup_{x\in 2X_{j}%
}\left\vert \left\Vert \nabla TP\left(  x\right)  \right\Vert -STP\left(
x\right)  \right\vert +\delta\sum_{j=1}^{N}\frac{1}{N}\sup_{x\in 2X_{j}%
}\left\vert STP\left(  x\right)  \right\vert \\
&  \leq\delta\varepsilon+\delta\sum_{j=1}^{N}\frac{1}{N}\sup_{x\in 2X_{j}%
}\left\vert STP\left(  x\right)  \right\vert .
\end{align*}
So far we have obtained the inequality
\[
\left\vert \int_{\mathcal{M}}\left\Vert \nabla P\left(  x\right)  \right\Vert
d\mu\left(  x\right)  -\sum_{j=1}^{N}\frac{1}{N}\left\Vert \nabla P\left(
x_{j}\right)  \right\Vert \right\vert \leq\left(  2+\delta\right)
\varepsilon+\delta\sum_{j=1}^{N}\frac{1}{N}\sup_{x\in 2X_{j}}\left\vert
STP\left(  x\right)  \right\vert .
\]
Let $h$ be a $\mathcal C^{\infty}$ even function such that $h(u)$ equals $1$ for $u\in\left[
-1,1\right]  $ and $h(u) $ equals $0$ for $\left\vert u\right\vert \geq2$. For
any $L\geq0$, define the kernels%
\begin{align*}
W_{L}\left(  x,y\right)   &  =\sum_{0<\lambda_{k}}\frac{1}{\lambda
_{k}^{2}}h\left(  \frac{\lambda_{k}}{L}\right)  \varphi_{k}\left(  x\right)
\varphi_{k}\left(  y\right)  \\
\Psi_{L}\left(  x,y\right)   &  =\Delta_{y}W_{L}\left(  x,y\right)
=\sum_{0<\lambda_{k}}h\left(  \frac{\lambda_{k}}{L}\right)  \varphi
_{k}\left(  x\right)  \varphi_{k}\left(  y\right)  .
\end{align*}
Since $\Psi_{L}\left(  x,y\right)  $ is a reproducing kernel for $\Pi_{L}^{0}%
$, we have by Green's formula (\ref{Green})%
\begin{align*}
P\left(  x\right)   &  =\int_{\mathcal{M}}P\left(  y\right)  \Psi_{L}\left(
x,y\right)  d\mu\left(  y\right)  =\int_{\mathcal{M}}P\left(  y\right)
\Delta_{y}W_{L}\left(  x,y\right)  d\mu\left(  y\right)  \\
&  =\int_{\mathcal{M}}\left\langle \nabla_{y}P\left(  y\right)  ,\nabla
_{y}W_{L}\left(  x,y\right)  \right\rangle d\mu\left(  y\right)  .
\end{align*}
Thus%
\[
STP\left(  x\right)  =\int_{\mathcal{M}}\left\langle \nabla_{y}P\left(
y\right)  ,\nabla_{y}S_{x}T_{x}W_{L}\left(  x,y\right)  \right\rangle
d\mu\left(  y\right)
\]
and%
\begin{align*}
\left\vert STP\left(  x\right)  \right\vert  &  =\left\vert \int%
_{\mathcal{M}}\left\langle \nabla_{y}P\left(  y\right)  ,\nabla_{y}S_{x}%
T_{x}W_{L}\left(  x,y\right)  \right\rangle d\mu\left(  y\right)  \right\vert
\\
&  \leq\int_{\mathcal{M}}\left\Vert \nabla_{y}P\left(  y\right)  \right\Vert
\left\Vert \nabla_{y}S_{x}T_{x}W_{L}\left(  x,y\right)  \right\Vert
d\mu\left(  y\right)  .
\end{align*}
We will show in a moment that%
\begin{equation}
\left\Vert \nabla_{y}S_{x}T_{x}W_{L}\left(  x,y\right)  \right\Vert \leq\kappa
L^{d+1}\left(  1+L\left\vert x-y\right\vert \right)  ^{-d-1}%
.\label{kernel estimate}%
\end{equation}
This inequality implies that%
\begin{align*}
&  \delta\sum_{j=1}^{N}\frac{1}{N}\sup_{x\in 2X_{j}}\left\vert STP\left(
x\right)  \right\vert \\
&  \leq\delta\sum_{j=1}^{N}\frac{1}{N}\sup_{x\in 2X_{j}}\int_{\mathcal{M}%
}\left\Vert \nabla_{y}P\left(  y\right)  \right\Vert \left\Vert \nabla
_{y}S_{x}T_{x}W_{L}\left(  x,y\right)  \right\Vert d\mu\left(  y\right)  \\
&  \leq\kappa\delta\sum_{j=1}^{N}\frac{1}{N}\sup_{x\in 2X_{j}}\int%
_{\mathcal{M}}\left\Vert \nabla_{y}P\left(  y\right)  \right\Vert
L^{d+1}\left(  1+L\left\vert x-y\right\vert \right)  ^{-d-1}d\mu\left(
y\right)  \\
&  \leq\kappa\delta\int_{\mathcal{M}}\left\Vert \nabla_{y}P\left(  y\right)
\right\Vert \left\{  \sum_{j=1}^{N}\frac{L^{d+1}}{N}\sup_{x\in 2X_{j}}\left(
1+L\left\vert x-y\right\vert \right)  ^{-d-1}\right\}  d\mu\left(  y\right)  .
\end{align*}
For any fixed $y$, let now $J=\left\{  j:\mathrm{dist}\left(  2X_{j},y\right)
\geq2\delta\right\}  $ and $J^{\prime}$ its complement. It is easy to see that,
calling $q_{j}$ the point in $2X_{j}$ that is closest to $y$, and $p_{j}$ the
point in $2X_{j}$ that is farthest from $y,$ then if $j\in J$%
\[
1+\frac{L}{2}\left\vert p_{j}-y\right\vert \leq1+\frac{L}{2}\left(  \left\vert
q_{j}-y\right\vert +2\delta\right)  \leq1+L\left\vert q_{j}-y\right\vert
\]
and therefore by (\ref{radiale})
\begin{align*}
&  \sum_{j\in J}\frac{L^{d+1}}{N}\sup_{x\in 2X_{j}}\left(  1+L\left\vert
x-y\right\vert \right)  ^{-d-1}\\
&  =\sum_{j\in J}\frac{L^{d+1}}{N}\left(  1+L\left\vert q_{j}-y\right\vert
\right)  ^{-d-1}\leq\sum_{j\in J}\frac{L^{d+1}}{N}\left(  1+\frac{L}%
{2}\left\vert p_{j}-y\right\vert \right)  ^{-d-1}\\
&  =\sum_{j\in J}\int_{R_{j}}L^{d+1}\left(  1+\frac{L}{2}\left\vert
p_{j}-y\right\vert \right)  ^{-d-1}d\mu\left(  x\right)  \\
&  \leq\sum_{j\in J}\int_{R_{j}}L^{d+1}\left(  1+\frac{L}{2}\left\vert
x-y\right\vert \right)  ^{-d-1}d\mu\left(  x\right)  \\
&  \leq\int_{\mathcal{M}  }L^{d+1}\left(
1+\frac{L}{2}\left\vert x-y\right\vert \right)  ^{-d-1}d\mu\left(  x\right)
\\
&  \leq c_6L^{d+1}\int_{0}^{+\infty}\left(  1+\frac{L}{2}s\right)
^{-d-1}s^{d-1}ds\\
&  \leq c_6L^{d+1}\left(  \int%
_{0}^{1/L}s^{d-1}ds+\left(  \frac{2}{L}\right)  ^{d+1}\int%
_{1/L}^{+\infty}s^{-2}ds\right)  \\
&  \leq (d^{-1}+2^{d+1}) c_{6}L,
\end{align*}
Observe that the cardinality of $J^{\prime}$ is uniformly bounded with respect to
$y$ and $N$. Indeed, the cardinality of $J'$ is bounded above by the number of
inner balls $Y_j$ that are contained in the ball $B(y, 4\delta)$, and this number is 
bounded above by the ratio
\[
\frac{\mu(B(y,4\delta))}{\min_{j=1,\ldots, N}\mu(Y_j)}\le
\frac{c_5(8c_2N^{-1/d})^d}{c_4(c_1N^{-1/d})^d}=\frac{c_5(8c_2)^d}{c_4c_1^d}.
\]
Therefore, since $L\le N^{1/d}$,
\begin{align*}
\sum_{j\in J^{\prime}}\frac{L^{d+1}}{N}\sup_{x\in 2X_{j}}\left(  1+L\left\vert
x-y\right\vert \right)  ^{-d-1}&\leq\sum_{j\in J^{\prime}}\frac{L^{d+1}}{N}
\leq
\frac{c_5(8c_2)^d}{c_4c_1^d}\frac{L^{d+1}}{N}
\leq
\frac{8^dc_5c_2^d}{c_4c_1^d}L.
\end{align*}
Overall%
\begin{align*}
&\left\vert \int_{\mathcal{M}}\left\Vert \nabla P\left(  x\right)  \right\Vert
d\mu\left(  x\right)  -\sum_{j=1}^{N}\frac{1}{N}\left\Vert \nabla P\left(
x_{j}\right)  \right\Vert \right\vert \\
\leq&\left(  2+\delta\right)
\varepsilon+\kappa\left((d^{-1}+2^{d+1})c_6+\frac{8^dc_5c_2^d}{c_4c_1^d}\right)\delta L\int_{\mathcal{M}}\left\Vert \nabla P\left(  y\right)
\right\Vert d\mu\left(  y\right)  .
\end{align*}
Taking
\[
\varepsilon=\frac{\kappa\left((d^{-1}+2^{d+1})c_6+\dfrac{8^dc_5c_2^d}{c_4c_1^d}\right)\delta L}{2+\delta}\int_{\mathcal{M}}\left\Vert \nabla P\left(
y\right)  \right\Vert d\mu\left(  y\right)
\]
we obtain%
\[
\left\vert \int_{\mathcal{M}}\left\Vert \nabla P\left(  x\right)  \right\Vert
d\mu\left(  x\right)  -\sum_{j=1}^{N}\frac{1}{N}\left\Vert \nabla P\left(
x_{j}\right)  \right\Vert \right\vert \leq C_3LN^{-1/d}\int_{\mathcal{M}%
}\left\Vert \nabla P\left(  y\right)  \right\Vert d\mu\left(  y\right),
\]
where
\[
C_3=4c_2\kappa\left((d^{-1}+2^{d+1})c_6+\frac{8^dc_5c_2^d}{c_4c_1^d}\right).
\]
It remains to show (\ref{kernel estimate}). Since%
\[
\nabla_{y}S_{x}T_{x}W_{L}\left(  x,y\right)  =\sum_{0<\lambda_{k}\leq2L}%
\frac{1}{\lambda_{k}^{2}}h\left(  \frac{\lambda_{k}}{L}\right)  S%
T\varphi_{k}\left(  x\right)  \nabla\varphi_{k}\left(  y\right)  ,
\]
it is enough to estimate%
\[
\sum_{0<\lambda_{k}\leq2L}\frac{1}{\lambda_{k}^{2}}h\left(  \frac{\lambda_{k}%
}{L}\right)  ST\varphi_{k}\left(  x\right)  U\varphi_{k}\left(
y\right)
\]
for a generic vector field $U$ with $\left\Vert U\left(  x\right)  \right\Vert
=1$ for all $x\in\mathcal{M}$. Since for all $u>0$ $h(u)=\sum_{j=0}^{+\infty}h(2^ju)-h(2^{j+1}u)$, we have%
\begin{align*}
&  \sum_{0<\lambda_{k}}\frac{1}{\lambda_{k}^{2}}h\left(  \frac
{\lambda_{k}}{L}\right)  ST\varphi_{k}\left(  x\right)  U%
\varphi_{k}\left(  y\right)  \\
&  =\sum_{j=0}^{+\infty}\left(  \sum_{0<\lambda_{k}}\frac
{1}{\lambda_{k}^{2}}\left(  h\left(  \frac{2^{j}\lambda_{k}}{L}\right)
-h\left(  \frac{2^{j+1}\lambda_{k}}{L}\right)  \right)  ST\varphi
_{k}\left(  x\right)  U\varphi_{k}\left(  y\right)  \right)  .
\end{align*}
Now apply Theorem \ref{kernel} with $\phi_{k}=ST\varphi_{k}$,
$\psi_{k}=U\varphi_{k}$, $\kappa_1=c_7(2,2)$, $\kappa_2=c_7(1,1)$, $\kappa_3=c_7(2,1)$, 
$\kappa_4=c_8(2,1)$, $A_{1}=d+4$,
$A_{2}=d+2,$ $A_3=(d+3)/2$, $S>d+3$, $H\left(  u\right)  =u^{-2}\left(  h\left(  2u\right)
-h\left(  4u\right)  \right)  \in\mathcal{C}^{S}$ and is supported in $\{1/4\le|u|\le1\}$. Finally replace $L$ in
Theorem \ref{kernel} with $2L/2^{j}.$ Thus for $x,y\in\mathcal{M}$,%
\begin{align*}
&  \left\vert \sum_{0<\lambda_{k}}\frac{1}{\lambda_{k}^{2}}\left(  h\left(
\frac{2^{j}\lambda_{k}}{L}\right)  -h\left(  \frac{2^{j+1}\lambda_{k}}%
{L}\right)  \right)  ST\varphi_{k}\left(  x\right)  U\varphi
_{k}\left(  y\right)  \right\vert \\
&  =\left(  \frac{2L}{2^{j}}\right)  ^{-2}\left\vert \sum_{0<\lambda_{k}}\left(
\frac{2L}{2^{j}\lambda_{k}}\right)  ^{2}\left(  h\left(  \frac{2^{j}\lambda
_{k}}{L}\right)  -h\left(  \frac{2^{j+1}\lambda_{k}}{L}\right)  \right)
ST\varphi_{k}\left(  x\right)  U\varphi_{k}\left(  y\right)
\right\vert \\
&  =\left(  \frac{2L}{2^{j}}\right)  ^{-2}\left\vert \sum_{0<\lambda_{k}}H\left(
\frac{2^{j}\lambda_{k}}{2L}\right)  ST\varphi_{k}\left(  x\right)
U\varphi_{k}\left(  y\right)  \right\vert \\
&  \leq C_9\left(  \frac{2L}{2^{j}}\right)  ^{-2}\frac{\left( 2 L/2^{j}\right)
^{d+3}}{\left(  1+2L{2^{-j}}\left\vert x-y\right\vert \right)  ^{S}}=C_9\frac{\left(
2L2^{-j}\right)  ^{d+1}}{\left(  1+2L2^{-j}\left\vert x-y\right\vert \right)
^{S}}.
\end{align*}
Adding up in $j,$%
\begin{align*}
&  \left\vert \sum_{j=0}^{+\infty}\left(  \sum_{0<\lambda_{k}%
}\frac{1}{\lambda_{k}^{2}}\left(  h\left(  \frac{2^{j}\lambda_{k}}{L}\right)
-h\left(  \frac{2^{j+1}\lambda_{k}}{L}\right)  \right)  ST\varphi
_{k}\left(  x\right)  U\varphi_{k}\left(  y\right)  \right)  \right\vert
\\
&  \leq C_9\sum_{j=0}^{+\infty}\frac{\left(  2L2^{-j}\right)
^{d+1}}{\left(  1+2L2^{-j}\left\vert x-y\right\vert \right)  ^{S}}.
\end{align*}
Now, if $L\left\vert x-y\right\vert \leq1$ then%
\[
\sum_{j=0}^{+\infty}\frac{\left( 2 L2^{-j}\right)  ^{d+1}%
}{\left(  1+2L2^{-j}\left\vert x-y\right\vert \right)  ^{S}}\leq \sum
_{j=0}^{+\infty}\left(  2L2^{-j}\right)  ^{d+1}\leq 2^{d+2}L^{d+1},
\]
while if $L\left\vert x-y\right\vert \geq1$ then%
\begin{align*}
&  \sum_{j=0}^{+\infty}\frac{\left(  2L2^{-j}\right)  ^{d+1}%
}{\left(  1+2L2^{-j}\left\vert x-y\right\vert \right)  ^{S}}\\
&  \leq\sum_{1\leq2^{j}\leq L\left\vert x-y\right\vert }\frac{\left(
2L2^{-j}\right)  ^{d+1}}{\left(  1+2L2^{-j}\left\vert x-y\right\vert \right)
^{S}}+\sum_{L\left\vert x-y\right\vert \leq2^{j}}%
\frac{\left(  2L2^{-j}\right)  ^{d+1}}{\left(  1+2L2^{-j}\left\vert
x-y\right\vert \right)  ^{S}}\\
&  \leq \sum_{1\leq2^{j}\leq L\left\vert x-y\right\vert }\frac{\left(
2L2^{-j}\right)  ^{d+1}}{\left(  2L2^{-j}\left\vert x-y\right\vert \right)
^{S}}+\sum_{L\left\vert x-y\right\vert \leq2^{j}}\left(
2L2^{-j}\right)  ^{d+1}\\
&  \leq 2^{d+2-S}\frac{L^{d+1-S}}{\left\vert x-y\right\vert ^{S}}L^{S-d-1}\left\vert
x-y\right\vert ^{S-d-1}+2^{d+2}L^{d+1}L^{-d-1}\left\vert x-y\right\vert ^{-d-1}\\
&  \leq 2^{d+3}\left\vert x-y\right\vert ^{-d-1}.
\end{align*}
Overall%
\[
\sum_{j=0}^{+\infty}\frac{\left(  L2^{-j}\right)  ^{d+1}%
}{\left(  1+L2^{-j}\left\vert x-y\right\vert \right)  ^{S}}\leq 2^{d+3}L^{d+1}%
\left(  1+L\left\vert x-y\right\vert \right)  ^{-d-1}.
\]

\section{Proof of Theorem \ref{final}}\label{proof}
Let $\Omega$ be the open subset of the vector space $\Pi_L^0\subset L^2(\mathcal M,\,d\mu)$
\[
\Omega=\left\{P\in\Pi_L^0:\int_{\mathcal M}\|\nabla P(x)\|d\mu(x)<1\right\}.
\]
Since $\int_{\mathcal M}\|\nabla P(x)\|d\mu(x)$ is a norm in the finite dimensional space $\Pi_L^0$, it is equivalent to the $L^2$ norm 
in $\Pi_L^0$, and it follows that $\Omega$ is bounded in $\Pi_L^0\subset L^2(\mathcal M)$, and the map from $\Pi_L^0\subset L^2(\mathcal M)$ to $\mathbb R$ given by 
\[
P\mapsto \int_{\mathcal M}\|\nabla P(x)\|d\mu(x),
\]
is continuous, so that $\Omega$ is open.
\begin{lemma}\label{Fcontinua}
There exists a continuous map $F:\Pi_L^0\to\mathcal M^N$ such that for every $P\in\partial\Omega$
\[
\sum_{j=1}^NP(x_j(P))>0,
\]
 where  $F(P)=(x_1(P),\ldots,x_N(P))$.
\end{lemma}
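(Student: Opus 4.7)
The plan is to construct $F$ by transporting a fixed system of base points along a short gradient flow. For each $j$, fix $y_j$ to be the center of the inner geodesic ball $Y_j\subset R_j$ provided by Theorem~\ref{partition}. For a small parameter $\varepsilon>0$ let $v_\varepsilon$ be the smooth cut-off already used in the proof of Theorem~\ref{MZ gradients}, and consider the continuous, unit-norm-bounded vector field
\[
V_P(x):=\frac{\nabla P(x)}{v_\varepsilon(\|\nabla P(x)\|)},\qquad \|V_P(x)\|_x\le 1.
\]
Denoting by $\phi^P_t$ its flow on $\mathcal M$, I would define $x_j(P):=\phi^P_\sigma(y_j)$ for a parameter $\sigma>0$ of order $c_1N^{-1/d}$, chosen so that $\phi^P_t(y_j)\in Y_j\subset R_j$ for every $t\in[0,\sigma]$ (which is possible because $\|V_P\|\le 1$). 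Continuity of $F$ follows from the standard continuous dependence of ODE flows on their vector field, combined with the continuous (in fact linear, up to the scaling by $1/v_\varepsilon$) dependence of $V_P$ on $P$.

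To verify positivity on $\partial\Omega$, I would split $\sum_j P(x_j(P)) = \sum_j P(y_j) + \sum_j(P(x_j(P)) - P(y_j))$ and treat the two pieces separately. For the gradient-flow contribution, the chain rule yields the exact identity
\[
P(x_j(P)) - P(y_j) = \int_0^\sigma \frac{\|\nabla P(\phi^P_t(y_j))\|^2}{v_\varepsilon(\|\nabla P(\phi^P_t(y_j))\|)}\,dt,
\]
and the elementary bound $u^2/v_\varepsilon(u)\ge u - \varepsilon/2$, checked case by case from the definition of $v_\varepsilon$, allows one (after summing in $j$ and applying Theorem~\ref{MZ gradients} at each fixed $t$ to the points $\phi^P_t(y_j)\in R_j$) to conclude
\[
\sum_j\bigl(P(x_j(P)) - P(y_j)\bigr)\ge \tfrac12\sigma N\int\|\nabla P\|\,d\mu = \tfrac{c_1}{2}\,N^{1-1/d},
\]
provided $\varepsilon$ is small and $N\ge C_{\mathcal M} L^d$ with $C_{\mathcal M}$ large enough that $C_3 LN^{-1/d}\le 1/4$. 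For the baseline term, the geodesic formula~(\ref{geodesic}) together with $\int P\,d\mu=0$ gives
\[
\Bigl|\sum_j P(y_j)\Bigr|\le 2c_2 N^{-1/d}\sum_j\sup_{x\in 2X_j}\|\nabla P(x)\|,
\]
and the right-hand side can be bounded by $C''\,N^{1-1/d}$ through a bounded-overlap/maximal-function argument combined with pointwise Bernstein-type bounds on $\nabla P$ inherited from Theorem~\ref{kernel}.

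Combining the two estimates, $\sum_j P(x_j(P))\ge (c_1/2 - C'')\,N^{1-1/d}$, which is strictly positive provided one can arrange $c_1/2 > C''$. This is exactly the point I expect to be the main obstacle: the constant $C''$ produced by a crude pointwise estimate on $\|\nabla P\|_\infty$ can easily exceed $c_1/2$, so one needs a sharper estimate, for instance by applying Theorem~\ref{MZ gradients} to the sup-sum on the right-hand side of the bound on $|\sum_j P(y_j)|$ (valid because the centers of the doubled balls $2X_j$ inherit a partition-like structure from the original partition), or by replacing the single-step flow of time $\sigma$ by a multi-step or $P$-dependent transport designed to cancel the baseline. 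In each such variant the key ingredients remain the same: the gradient flow of $V_P$ and the Marcinkiewicz-Zygmund inequality for gradients proved in Theorem~\ref{MZ gradients}.
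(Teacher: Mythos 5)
Your construction and decomposition are essentially the same as the paper's: flow the base points along $\nabla P/v_\varepsilon(\|\nabla P\|)$, split $\sum_j P(x_j(P))$ into a baseline term plus the integral of the $t$-derivative along the flow, bound both pieces using Theorem~\ref{MZ gradients}. But you stop at the exact point where the argument must be closed, and the gap you flag (``$c_1/2 > C''$'') is real and is not resolved by a ``bounded-overlap/maximal-function argument with pointwise Bernstein bounds''. The culprit is your choice of flow time: by insisting that the trajectory $\phi^P_t(y_j)$ stay inside $Y_j\subset R_j$, you cap $\sigma$ at order $c_1 N^{-1/d}$, and this forces the gain of the flow term to be proportional to $c_1$, which there is no reason to expect to beat the baseline constant $C''$, itself naturally proportional to $c_2\ge c_1$.

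The missing idea, which is what the paper's proof does, is to drop the requirement that the flow stay in $R_j$ and instead run it for a time proportional to $c_2 N^{-1/d}$ — concretely, $12\,c_2 N^{-1/d}$. Since $\|U(P)\|\le 1$, each $y_j(P,t)$ stays within distance $t \le 12 c_2 N^{-1/d}$ of $x_j$, so the augmented ``partition'' $R_j'' = R_j\cup\{y_j(P,t)\}$ still satisfies Theorem~\ref{partition}, with the outer-radius constant $c_2$ replaced by $13 c_2$. Theorem~\ref{MZ gradients} then applies to $\mathcal R''$ with constant $C_3(c_1,13c_2)$; choosing $C_{\mathcal M}\ge 2^d C_3(c_1,13c_2)^d$ forces $C_3(c_1,13c_2)LN^{-1/d}\le 1/2$, so $\frac1N\sum_j\|\nabla P(y_j(P,t))\|\ge \frac12\int\|\nabla P\|\,d\mu=\frac12$ on $\partial\Omega$, and the flow term contributes at least $12 c_2 N^{-1/d}(\tfrac12-\varepsilon)$. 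The same device — augment the partition by the argmax points $z_j\in 2X_j$, then apply Theorem~\ref{MZ gradients} with constant $C_3(c_1,2c_2)$ — turns your bound $|\tfrac1N\sum_j P(x_j)|\le \tfrac{2c_2}{N^{1+1/d}}\sum_j\sup_{2X_j}\|\nabla P\|$ into $\le \tfrac{3c_2}{N^{1/d}}$, with \emph{no} pointwise Bernstein estimate needed. The two bounds then combine to $(3-12\varepsilon)c_2 N^{-1/d}>0$ for $\varepsilon<\tfrac14$. So the fix is not a sharper bound on $\|\nabla P\|_\infty$ (which would not give a constant beating $c_1/2$) but a longer flow and the observation that the perturbed nodes still define an admissible partition in the sense of Theorem~\ref{partition}, just with an inflated outer constant that is harmless because $C_{\mathcal M}$ is chosen afterwards.
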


Let us first show that this lemma readily implies Theorem \ref{final}. 
By the Riesz representation theorem, for each point $x\in\mathcal M$
there exists a unique polynomial $G_x\in\Pi_L^0$ such that 
\[
\langle G_x, P\rangle = P(x) \text{ for all } P\in \Pi_L^0.
\]
Then a set of points $x_1,\ldots,x_N\in\mathcal M$ forms an $L$-design  if and only if
\[
G_{x_1}+\cdots+G_{x_N}=0.
\]
Now let $Z:\mathcal M^N\to\Pi_L^0$ be the continuous map defined by
\[
Z(x_1,\ldots,x_N)=G_{x_1}+\cdots+G_{x_N},
\]
and call $f=Z\circ F:\Pi_L^0\to\Pi_L^0$. Clearly for every $P\in\partial \Omega$ we have
\[
\langle P,f(P)\rangle=\sum_{j=1}^NP(x_j(P))>0
\]
by the lemma, and by Theorem \ref{brouwer} it follows that there exists 
$Q\in\Omega$ such that $Z(F(Q))=0$, that is such that $G_{x_1(Q)}+\cdots +G_{x_N(Q)}=0$ which implies that $\{x_1(Q),\ldots, x_N(Q)\}$
is an $L$-design.

\begin{proof}[Proof of Lemma \ref{Fcontinua}]
Take a partition of $\mathcal M$ as in Theorem \ref{partition} with
constants $c_1$ and $c_2$ and let $C_{\mathcal M}\ge\max\{1, 2^d C_3(c_1,2c_2)^d, 2^d C_3(c_1,13c_2)^d\}$,
where the constants $C_3(\cdot,\cdot)$ are as in Theorem \ref{MZ gradients}.
For each $j=1,\ldots,N$ choose an arbitrary $x_j\in R_j$.

Now fix $\varepsilon<1/4$ and let as before $v_{\varepsilon}:\mathbb{[}0,\mathbb{+\infty
)}\rightarrow\mathbb{R}$ be a $\mathcal C^\infty$ function such that $v_{\varepsilon
}\left(  u\right)  =u$ for $u\geq\varepsilon$, $v_{\varepsilon}\left(
u\right)  =\varepsilon/2$ for $u<\varepsilon/4$ and $v_{\varepsilon}\left(
u\right)  \geq u$ for all $u\geq0$. 

Take the mapping $U:\Pi_L^0\to \mathcal X(\mathcal M)$
defined by
\[
U(P)(y)=\frac{\nabla P(y)}{v_{\varepsilon}(\|\nabla P(y)\|)}, \quad y\in\mathcal M.
\]
For each $j=1,\ldots,N$ let $y_j:\Pi_L^0\times [0,+\infty)\to\mathcal M$
be the map satisfying the differential equation
\[
\frac {d}{dt}y_j(P,t)=U(P)(y_j(P,t))
\]
with the initial condition 
\[
y_j(P,0)=x_j
\]
for each $P\in\Pi_L^0$.
Since the mapping $U(p,y)$ is Lipschitz continuous in both $P$ and $y$, each $y_j$
is well defined and continuous in $P$ and $t$. 
Now set
\[
F(P)=(x_1(P),\ldots,\,x_N(P)):=\left(y_1\left(P,12c_2N^{-1/d}\right),\ldots,\,y_N\left(P,12c_2N^{-1/d}\right)\right),
\]
which is continuous on $\Pi_L^0$ by definition. Let now $P\in\partial\Omega$, that is 
\[
\int_{\mathcal M}\|\nabla P(x)\|d\mu(x)=1.
\]
We have
\begin{align*}
\frac 1N\sum_{j=1}^N P(x_j(P))&=\frac 1N\sum_{j=1}^N P(y_j(P,12c_2N^{-1/d}))\\
&=\frac 1N\sum_{j=1}^N P(x_j)+\int_{0}^{12c_2N^{-1/d}}\frac{d}{dt}\left(\frac 1N\sum_{j=1}^N P(y_j(P,t))\right)dt.
\end{align*}
Now,
\begin{align*}
\left|\frac 1N\sum_{j=1}^N P(x_j)\right|&=\left|\sum_{j=1}^N\int_{R_j}(P(x_j)-P(x))d\mu(x)\right|
\le
\sum_{j=1}^N\int_{R_j}|P(x_j)-P(x)|d\mu(x)\\
&\le\frac1{N}\sum_{j=1}^N\mathrm{diam}(R_j)\max_{z\in 2X_j}
\|\nabla P(z)\|
\le\frac{2c_2}{N^{1+1/d}}\sum_{j=1}^N
\|\nabla P(z_j)\|,
\end{align*}
where $z_j$ is the point that realizes the maximum.
Observe that the partition $\mathcal R'=\{R_1',\ldots,\,R_N'\}$ defined by $R_j'=R_j\cup\{z_j\}$ 
satisfies Theorem \ref{partition} with constants $c_1$ and $2c_2$. Therefore, by Theorem \ref{MZ gradients} applied to $P$ and the partition $\mathcal R'$, since
$C_3(c_1,2c_2)LN^{-1/d}\le(C_{\mathcal M}L^dN^{-1})^{1/d}/2\le1/2$, we have 
\begin{align*}
&\left|\frac 1N\sum_{j=1}^N P(x_j)\right|\le 
\frac{2c_2}{N^{1+1/d}}\sum_{j=1}^N\|\nabla P(z_j)\|\\
\le & \frac{2c_2}{N^{1/d}}\left|\sum_{j=1}^N\frac 1N\|\nabla P(z_j)\|-\int_{\mathcal M}\|\nabla P(z)\|d\mu(z)\right|+
\frac{2c_2}{N^{1/d}}\int_{\mathcal M}\|\nabla P(z)\|d\mu(z)\\
&\le \frac{3c_2}{N^{1/d}}\int_{\mathcal M}\|\nabla P(z)\|d\mu(z)=\frac{3c_2}{N^{1/d}}
\end{align*}
for any $P\in\partial\Omega$. On the other hand, for $t\in[0,12c_2N^{-1/d}]$
\begin{align*}
\frac{d}{dt}\left(\frac 1N\sum_{j=1}^N P(y_j(P,t))\right)=&\frac 1N\sum_{j=1}^N\frac{\|\nabla P(y_j(P,t))\|^2}{v_{\varepsilon}(\|\nabla P(y_j(P,t))\|)}\\
\ge &\frac 1N\sum_{j:\|\nabla P(y_j(P,t))\|\ge\varepsilon}\|\nabla P(y_j(P,t))\|\\
\ge&\frac 1N\sum_{j=1}^N\|\nabla P(y_j(P,t))\|-\varepsilon.
\end{align*}
Since clearly $|y_j(P,t)-x_j|\le t$, the partition $\mathcal R''=\{R_1'',\ldots,\,R_N''\}$ defined by $R_j''=R_j\cup\{y_j(P,t)\}$ 
satisfies Theorem \ref{partition} with the constants $c_1$ and $13c_2$. Therefore by Theorem \ref{MZ gradients} applied to $P$ and the partition $\mathcal R''$,
since $C_3(c_1,13c_2)LN^{-1/d}\le(C_{\mathcal M}L^dN^{-1})^{1/d}/2\le1/2$, we have
\begin{align*}
&\frac{d}{dt}\left(\frac 1N\sum_{j=1}^N P(y_j(P,t))\right)\\
\ge&\frac 1N\sum_{j=1}^N\|\nabla P(y_j(P,t))\|-\varepsilon\\
\ge&\int_{\mathcal M}\|\nabla P(y)\|d\mu(y)-\left|\int_{\mathcal M}\|\nabla P(y)\|d\mu(y)-\frac 1N\sum_{j=1}^N\|\nabla P(y_j(P(t)))\|\right|-\varepsilon\\
\ge&\frac 12\int_{\mathcal M}\|\nabla P(y)\|d\mu(y)-\varepsilon=\frac 12-\varepsilon
\end{align*}
for each $P\in\partial\Omega$ and $t\in[0,12c_2N^{-1/d}]$. Finally,
\begin{align*}
\frac 1N\sum_{j=1}^N P(x_j(P))&=\frac 1N\sum_{j=1}^N P(x_j)+\int_{0}^{12c_2N^{-1/d}}\frac{d}{dt}\left(\frac 1N\sum_{j=1}^N P(y_j(P,t))\right)dt\\
&\ge \frac{12c_2}{N^{1/d}}\left(\frac 12-\varepsilon\right)-\frac{3c_2}{N^{1/d}}=(3-12\varepsilon)\frac{c_2}{N^{1/d}}>0.
\end{align*}

\end{proof}


\begin{thebibliography}{999999}     

\bibitem{BGM} M. Berger, P. Gauduchon, E. Mazet, Le spectre d'une vari\'et\'e riemannienne, \textit{Lecture Notes in Mathematics,} \textbf{194}, Springer-Verlag, Berlin-New York, 1971.
                                                                                      %
\bibitem{BRV} A. Bondarenko, D. Radchenko, M. Viazovska, \textit{Optimal asymptotic bounds for 
spherical designs}, Ann. Math. \textbf{178}, 443--452 (2013).

\bibitem{quadrature} L. Brandolini, C. Choirat, L. Colzani, G. Gigante, R. Seri, G. Travaglini,
\textit{Quadrature rules and distribution of points on manifolds}, Ann. Sc. Norm. Sup. Pisa Cl. Sci. (5)
\textbf{XIII}, 889--923 (2014).

\bibitem{Chen} L. Brandolini, W. W. L Chen, L. Colzani, G. Gigante, G. Travaglini, \textit{Discrepancy and Numerical Integration on Metric Measure Spaces}, J. Geom. Anal. (2018), https://doi.org/10.1007/s12220-018-9993-6.

\bibitem{Brauchart1} J. S. Brauchart, J. Dick, E. B. Saff, I. H. Sloan, Y. G. Wang, R. S. Womersley, \textit{Covering of spheres by spherical caps and worst-case error for equal weight cubature in Sobolev spaces}, J. Math. Anal. Appl. \textbf{431}, 782--811 (2015).

\bibitem{Brauchart2} J. S. Brauchart, E. B. Saff, I. H. Sloan, R. S. Womersley, \textit{QMC designs: optimal order quasi Monte Carlo integration schemes on the sphere}, Math. Comput. \textbf{83}, 2821--2851 (2014).

\bibitem{DC} M. P. do Carmo, Riemannian Geometry, Translated from the second Portuguese edition by Francis Flaherty. \textit{Mathematics: Theory \& Applications,} Birkh\"auser Boston, Inc., Boston, MA, 1992.

\bibitem{EMOC} U. Etayo, J. Marzo, J. Ortega-Cerd\`a, \textit{Asymptotically optimal designs on 
compact algebraic manifolds}, J. Monatsh. Math. \textbf{186}, 235--248 (2018).

\bibitem{FM2010}F. Filbir, H. N. Mhaskar, \textit{A Quadrature Formula
for Diffusion Polynomials Corresponding to a Generalized Heat Kernel,} J.
Fourier Anal. Appl. \textbf{16}, 629--657 (2010).

\bibitem{FM2011}F. Filbir, H. N. Mhaskar, \textit{Marcinkiewicz-Zygmund measures on manifolds,}
J. Complexity \textbf{27}, 568--596 (2011).

\bibitem{GL}G. Gigante, P. Leopardi, \textit{Diameter bounded equal
measure partitions of Ahlfors regular metric measure spaces,} Discret. Comput.
Geom. \textbf{57}, 419--430 (2017).

\bibitem{Gr} P. Greiner, \textit{An asymptotic expansion for the heat equation,} Arch. Rational Mech. Anal. \textbf{41}, 163--218 (1971).


\bibitem{HOR}L. Hormander, The analysis of linear partial differential
operators, I II III IV, Springer Verlag, 1983-1985.

\bibitem{Ko} Yu. A. Kordyukov, \textit{$L^p$ theory of elliptic differential operators on manifolds of bounded geometry,}  Acta Appl. Math. \textbf{23}, 223--260 (1991).

\bibitem{KM}J. Korevaar, J. L. H. Meyers, \textit{Spherical Faraday cage for the case of equal
point charges and Chebyshev-type quadrature on the sphere,} Integral Transform. Spec. Funct. \textbf{1},
105--117 (1993).

\bibitem{L} J. M. Lee, Introduction to smooth manifolds,
Second edition, \textit{Graduate Texts in Mathematics,} \textbf{218}, Springer, New York, 2013.

\bibitem{MM}M. Maggioni, H. N. Mhaskar, \textit{Diffusion polynomial frames on metric measure spaces,} Appl. Comput. Harmon. Anal. \textbf{24}, 329--353 (2008).

\bibitem{brouwer} D. O'Regan, Y. J. Cho, Y.-Q. Chen, Topological Degree Theory and Applications,
\textit{Ser. Math. Anal. Appl.}, \textbf{10}, Chapman \& Hall / CRC, Boca Raton, FL, 2006.

\bibitem{Sikora}A. Sikora, Riesz transform, \textit{Gaussian bounds
and the method of wave equation}. Math. Z. \textbf{247}, 643--662 (2004).

\bibitem{Stein}E. M. Stein, R. Shakarchi, Complex analysis. \textit{Princeton Lectures in Analysis, 2}. Princeton University Press, Princeton, NJ, 2003.                                                                       


\end{thebibliography}
\end{document}